\documentclass[10pt]{amsart}
\usepackage{amssymb}
\usepackage{amsmath}
\usepackage{epsfig}
\usepackage{graphics}

%%%%%%%%%%%%%%%%%%%%%%%%
%Macros of this article%
%%%%%%%%%%%%%%%%%%%%%%%%

\font\tenmath=msbm10 \font\sevenmath=msbm7 \font\fivemath=msbm5
\newfam\mathfam \textfont\mathfam=\tenmath
\scriptfont\mathfam=\sevenmath \scriptscriptfont\mathfam=\fivemath

\def \\ { \cr }

\newcommand{\RR}{{\mathbb R}}

\newcommand{\NN}{{\mathbb N}}
\newcommand{\ZZ}{\mathbb Z}
\newcommand{\QQ}{{\mathbb Q}}

\def\by{{ \bf y}}
\def\bx{{ \bf x}}
\def\bz{{ \bf z}}
\def\bw{{ \bf w}}
\def\bv{{ \bf v}}
\def\bu{{ \bf u}}

\def\ba{{ \bf a}}
\def\bb{{ \bf b}}

\def\M{{\mathcal M}}

\def\S{{\mathcal S}}

\newcommand{\IET}{T_{(\lambda,\pi)}}

\newcommand{\AIET}{f}
\newcommand{\xs}{X_{\sigma}}

\numberwithin{equation}{section}

\newtheorem{theo}{Theorem}
\newtheorem{prop}[theo]{Proposition}

\newtheorem{lemma}[theo]{Lemma}

\theoremstyle{remark}

%%%% ajouté par XB

\newcommand{\SFTU}{\overline{\S}}
\newcommand{\SFTD}{\underline{\S}}

\begin{document}

\parindent = 0cm

\title
{Persistence of wandering intervals in self-similar affine interval exchange transformations }

\author{Xavier Bressaud}
\address{Institut de Math\'ematiques de Luminy, 163 avenue de Luminy, Case
907, 13288 Marseille Cedex 9, France.}
\email{bressaud@iml.univ-mrs.fr}

\author{Pascal Hubert}
\address{Laboratoire Analyse, Topologie et Probabilit\'es, 
Case cour A,
Facult\'e des Sciences de  Saint-Jer\^ome,
Avenue Escadrille Normandie-Niemen,
13397 Marseille Cedex 20, France.} \email{hubert@cmi.univ-mrs.fr}

\author{Alejandro Maass}
\address{Centro de Modelamiento Matem\'atico and Departamento de Ingenier\'{\i}a
Matem\'atica, Universidad de Chile, Av. Blanco Encalada 2120,
Santiago, Chile.} \email{amaass@dim.uchile.cl}

\subjclass{Primary: 37C15; Secondary: 37B10} \keywords{interval exchange transformations, substitutive systems, wandering sets}

\begin{abstract}
In this article we prove that given a self-similar  interval exchange transformation
$T_{(\lambda, \pi)}$, whose associated matrix verifies a quite general algebraic condition, 
there exists an affine interval exchange transformation  with wandering intervals that 
is semi-conjugated to it. That is, in this context the existence of Denjoy counterexamples occurs very often, generalizing the result of  M. Cobo in  \cite{Co}. 
\end{abstract}

\date{July 7, 2007}

\maketitle \markboth{Xavier Bressaud, Pascal Hubert, Alejandro
Maass}{Persistence of wandering intervals in self-similar AIET}

%%%%%%%%%%%%%%%%%%%%%%%%%%%%%%%%%%%%%%%%%%
 \section{Introduction}
%%%%%%%%%%%%%%%%%%%%%%%%%%%%%%%%%%%%%%%%%%

Since the work of Denjoy \cite{De}  it is known that every $C^1$-diffeomorphism of
the circle such that the logarithm of its derivative is a function of
bounded variation has no wandering intervals. There is no analogous result
for interval exchange transformations. Levitt in \cite{Le} found an example of a
non-uniquely ergodic affine interval exchange transformation with
wandering intervals. Latter, Camelier and Gutierrez \cite{Ca}, using Rauzy
induction technique exhibited a uniquely ergodic affine interval
exchange transformation with wandering intervals. Moreover, this example
is semi-conjugated to a self-similar interval exchange transformation. In
geometric language, it means that this inter- val exchange transformation
is induced by a pseudo-Anosov diffeomorphism. In combinatorial terms, the
symbolic system is generated by a substitution

An interval exchange  transformation (IET) is defined by the length of the intervals $\lambda = (\lambda_1, \ldots, \lambda_r)$ and a permutation $\pi$. It is denoted by $T_{(\lambda, \pi)}$.
To define an affine interval exchange transformation (AIET) one additional 
information is needed; the slope of the map on each interval. This is a vector $(w_1, \ldots, w_r)$ with $w_i >0$ for $i = 1, \ldots, r$. Camelier and Gutierrez remarked that a necessary condition for an AIET to be conjugated to the interval exchange transformation  $T_{(\lambda, \pi)}$ is that the vector $\log(w) = (\log(w_1), \dots, \log(w_r))$ is orthogonal to $\lambda$.

The conjugacy of an affine interval exchange transformation with an interval exchange transformation was studied in details by Cobo \cite{Co}. He proved that the regularity of the conjugacy depends on the position of the vector $\log(w)$ in the flag of the lyapunov exponents of the Rauzy-Veech-Zorich induction. 
In particular, assume that $T_{(\lambda, \pi)}$ is self-similar,  which means that $\lambda$ is an eigenvector of a positive $r\times r$ matrix $R$ obtained by applying Rauzy induction a finite number of times. Cobo proves that if $\log(w)$ belongs to the contracting space of $^tR$ then $f$ is $C^1$ conjugated to $T_{(\lambda, \pi)}$. If $\log(w)$ is orthogonal to $\lambda$ and is not in the contracting space of  $^tR$ then any conjugacy between $f$ and $T_{(\lambda, \pi)}$ is not an absolutely continuous function. Moreover, 
Camelier and Guttierez example shows that conjugacy between $f$ and $T_{(\lambda, \pi)}$ does not always exist. 

In this paper, we prove the following result:

\begin{theo} \label{main}
Let $T_{(\lambda, \pi)}$ be a self-similar  interval exchange transformation and $R$ the associated matrix obtained by Rauzy induction.  Let $\theta_1$ be the Perron-Frobenius eigenvalue of $R$. Assume that $R$ has an eigenvalue $\theta_2$ such that
\begin{enumerate}
\item \label{algebraic-hyp} $\theta_2$ is a conjugate of $\theta_1$,
\item \label{real-hyp}$\theta_2$ is a real number,
\item \label{sup1-hyp}$1 < \theta_2 (< \theta_1)$.
\end{enumerate}
 Then there exists an affine interval exchange transformation $f$ with wandering intervals that is semi-conjugated to $T_{(\lambda, \pi)}$.
\end{theo}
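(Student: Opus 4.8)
The plan is to construct $f$ by hand as a ``twisted'' geometric realization of the substitution subshift coding $T_{(\lambda,\pi)}$, and to identify its wandering intervals with the complementary gaps of the resulting Cantor realization. Self-similarity means that the rescaled first-return map of $T_{(\lambda,\pi)}$ to a suitable subinterval is $T_{(\lambda,\pi)}$ itself, and the induced combinatorics define a primitive substitution $\sigma$ on $r$ letters with incidence matrix $R$; the natural coding conjugates the subshift $(X_\sigma,S)$ to $T_{(\lambda,\pi)}$. An AIET semi-conjugate to $T_{(\lambda,\pi)}$ with these combinatorics is determined by a positive length vector $\mu$ and a positive slope vector $w$, and one needs $\log w\perp\lambda$. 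I would take $\log w$ to be a \emph{real} left eigenvector of $R$ for the eigenvalue $\theta_2$: it exists by hypothesis \ref{real-hyp}, and it automatically satisfies $\log w\perp\lambda$ because $\theta_1\langle\log w,\lambda\rangle=\langle{}^tR\log w,\lambda\rangle=\theta_2\langle\log w,\lambda\rangle$ with $\theta_1\neq\theta_2$. Since $\theta_2>1$ this vector lies in the expanding space of ${}^tR$, so Cobo's theorem already forbids an absolutely continuous conjugacy; the new content is that there is no conjugacy at all.

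To construct $f$, use the Dumont--Thomas (prefix--suffix) expansion: each $x\in X_\sigma$ determines a nested sequence of cylinders $[x]_0\supset[x]_1\supset\cdots$, where $[x]_n$ is cut out by a level-$n$ word $\sigma^n(p)$. Realizing $[x]_n$ by an interval whose length is the $\lambda$-length of $\sigma^n(p)$, renormalized by $\theta_1^{-n}$, reconstructs $T_{(\lambda,\pi)}$ on $[0,\sum_i\lambda_i]$. I would instead weight these lengths by a multiplicative \emph{discrepancy cocycle}: when a piece labelled $a$ is subdivided along $\sigma(a)=b_1\cdots b_k$, give the sub-pieces lengths proportional to $\lambda_{b_1},\dots,\lambda_{b_k}$ times factors read off the prefixes $b_1,b_1b_2,\dots$ and built from $w$, chosen so that the slope of $f$ on the level-$0$ interval named $i$ comes out exactly $w_i$. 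For each $n$ this yields a partition of a fixed interval $[0,L]$ into finitely many intervals; defining $f$ as the adic (Bratteli--Vershik) map of $\sigma$ makes it piecewise affine with at most $r$ slopes, namely the $w_i$, and semi-conjugate to $T_{(\lambda,\pi)}$ via $x\mapsto\bigcap_n[x]_n$.

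The heart of the argument, and the step I expect to be the main obstacle, is to verify two opposing positivity statements at once: (i) every interval produced at every level has \emph{positive} length, so that the construction really defines an AIET; and (ii) the gap between $L$ and the total length of the level-$n$ cylinders stays bounded below by a fixed positive constant, so that the realization map $X_\sigma\to[0,L]$ is not onto and its image is a Cantor set of measure $<L$. The $\lambda$-lengths at level $n$ are of order $\theta_1^{-n}$ while the discrepancy cocycle grows like $\theta_2^n$ along the towers; hypothesis \ref{sup1-hyp} in the form $\theta_2>1$ is what forces the discrepancies to be genuinely non-constant, hence (ii) and the wandering intervals, while $\theta_2<\theta_1$ keeps the lengths summable and determines $L$. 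To get the precise bounds for (i) and (ii) I would use hypothesis \ref{algebraic-hyp}: since $\theta_2$ is a Galois conjugate of $\theta_1$, the vector $\log w$ and the cocycle are the images, under the automorphism $\tau$ of $\mathbb{Q}(\theta_1)=\mathbb{Q}(\theta_2)$ sending $\theta_1$ to $\theta_2$, of algebraic data attached to $\lambda$, so the required inequalities become identities between conjugate algebraic integers; I expect one must first pass to a high power $\sigma^N$ of the substitution to create enough room for (i). Once (i) and (ii) hold, the non-trivial fibres of the semi-conjugacy $\phi:[0,L]\to[0,\sum_i\lambda_i]$ are the closures of the gaps of the Cantor image, and since $T_{(\lambda,\pi)}$ is minimal the $T_{(\lambda,\pi)}$-orbit of the $\phi$-image of such a gap is infinite, so its $f$-preimages are pairwise disjoint intervals, i.e.\ wandering intervals, and the theorem follows.
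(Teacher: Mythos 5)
Your overall architecture is the same as the paper's: take the slope vector from a real eigenvector of ${}^tR$ for $\theta_2$ (so $\log w\perp\lambda$ automatically), build $f$ by inserting gaps into a realization of the substitutive coding, let the collapsing map give the semi-conjugacy, and use minimality of $T_{(\lambda,\pi)}$ to conclude the gaps wander. But the decisive step is missing, and it is exactly the place you flag as ``the main obstacle'' and then dispose of by hoping that Galois conjugacy turns the needed estimates into ``identities between conjugate algebraic integers.'' In any such construction the gaps are necessarily carried by a single $T_{(\lambda,\pi)}$-orbit, say of a point $t$ coded by $x\in X_\sigma$, and equivariance under $f$ forces the gap at $T_{(\lambda,\pi)}^n t$ to have length (up to a constant) $e^{-\gamma(x_0\cdots x_{n-1})}$ for $n\geq 1$ and $e^{\gamma(x_{-n}\cdots x_{-1})}$ for $n\leq -1$, where $\gamma=\log w$. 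So the construction produces an AIET on a finite interval if and only if the two series $\sum_{n\geq 1}e^{-\gamma(x_0\cdots x_{n-1})}$ and $\sum_{n\geq 1}e^{\gamma(x_{-n}\cdots x_{-1})}$ converge. This is false for a generic $x$: since $\gamma\perp\lambda$, the Birkhoff sums $\gamma_n(x)$ oscillate around $0$ (H\'alasz-type behaviour) with amplitude of order $n^{\log\theta_2/\log\theta_1}$, so the weights do not tend to $0$ along a subsequence and the total gap length is infinite. No passage to a power $\sigma^N$, and no algebraic identity coming from $\eta(\theta_1)=\theta_2$, can remove this; the hypotheses $1<\theta_2<\theta_1$ by themselves do not make your positivity statements (i)--(ii) true for the self-similar subdivision you describe, because you never specify, and the construction does not automatically select, the orbit along which the gaps are inserted.

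What is actually needed — and what constitutes the bulk of the paper — is the choice of a special orbit: a \emph{minimal point} $x\in X_\sigma$, i.e.\ one with $\gamma_n(x)\geq 0$ for all $n\in\ZZ$. One must prove such points exist (via the ``best strategy'' algorithm on prefix--suffix decompositions, where the algebraic hypothesis enters through the fact that no nontrivial nonnegative integer combination of the coordinates of $\gamma=\eta(\lambda)$ vanishes, making best occurrences unique), that their prefix--suffix expansions are ultimately periodic, and finally the quantitative lower bound $\liminf_n \gamma_n(x)/n^{\log\theta_2/\log\theta_1}>0$ in both time directions. Only this last estimate yields convergence of the two exponential series, hence a finite measure $\mu_t$ on the blown-up orbit satisfying $\mu_t(T_{(\lambda,\pi)}(I))=\sum_i e^{-\gamma_i}\mu_t(I\cap[a_{i-1},a_i))$, from which your concluding steps (slopes $e^{-\gamma_i}$, semi-conjugacy $h$, wandering interval equal to the blown-up atom at $t$) go through essentially as you wrote them. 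Without the minimal-point mechanism and its growth estimate, the proposal has no proof of finiteness of the total gap length, which is the whole difficulty of the theorem.
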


This result means that  Denjoy counterexamples occur very often (see section
\ref{pseudoAnosov}).

\subsection{Reader's guide}

Camelier-Gutierrez \cite{Ca} and Cobo \cite{Co} developed an strategy to prove the existence of a wandering interval in an affine interval exchange transformation $f$ which is semi-conjugated with a given IET. We explain it in section \ref{proof-main}. This strategy allowed them to achieve a first concrete example. Here we explore the limits of this method in order to consider a large (and in some sense abstract) family of IET.  
Let $\IET$ be a self-similar interval exchange transformation with associated matrix 
$R$. Let $\gamma = (\gamma_1, \ldots, \gamma_r)$ be the vector of the logarithm of the slopes of the affine interval exchange transformation $f$. If $f$ admits a wandering interval $I$, the length  
$\vert f(I) \vert$ is equal to   $e^{\gamma_j} \vert I \vert$ if $I$ is contained in interval $j$. Roughly speaking, to create a wandering interval from the interval exchange transformation $\IET$, one blows up an orbit of $\IET$. The difficulty is to insure that the total length remains finite. More precisely, if the symbolic coding of the orbit is $x =(x_n)_{n \in \ZZ}$, we have to check that the series
\begin{equation} \label{finitesums}
\sum_{n\geq 1} e^{{-\gamma(x_0) - \ldots -  \gamma(x_{n-1})}}  \text{ and } \sum_{n\geq 1} 
e^{{\gamma(x_{-n}) +\ldots + \gamma(x_{-1})}}
\end{equation} 
converge. This is certainly not true for a generic point $x$ of the symbolic system associated to $\IET$. Let $\ell(x)$ be the broken line with vertices $(n, \gamma(x_0) + \ldots +\gamma(x_{n-1}))_{n\in \NN}$ and $(n, \gamma(x_{-n}) +\ldots + \gamma(x_{-1}))_{n\geq 1}$. Since $\gamma$ is orthogonal to $\lambda$, for a generic point $x$, the line $\ell(x)$ oscillates around 0 as predicted by H\'alasz's Theorem (\cite{Ha}). If the vector $\gamma$ is not in the contracting space of $^tR$ the amplitude of the oscillations tends to infinity with speed  $$n^{\log(\theta_1)/\log(\theta_2)}.$$
It is hoped that the series \eqref{finitesums} converge if the $y$-coordinate of the broken line $\ell(x)$ is always positive and tends  to infinity fast enough as $n$ tends to $\pm \infty$. Points with this property are called {\it minimal points}. Those are the main tool of the paper. 

This analysis applies to a very large class of substitutions and not only to substitutions arising from interval exchange transformations. Section \ref{minimalpoints} gives an algorithm to construct minimal points. We prove that the prefix-suffix decomposition of any minimal point is ultimately periodic. From this analysis, we deduce that for any minimal point $x$ one has
\begin{equation} \label{speed}
\liminf_{n\to \infty} \frac{\gamma(x_0) +\ldots + \gamma(x_n)}
{ n^{\frac{\log(\theta_2)}{\log (\theta_1)}}} >0
\text{       and      }  
\liminf_{n\to \infty} \frac{-\gamma(x_{-n})-\ldots - \gamma(x_{-1})}{ n^{\frac{\log(\theta_2)}{\log (\theta_1)}}} >0
\end{equation}
Formulas in \eqref{speed} imply immediately the convergence of the series in \eqref{finitesums}. Moreover, formulas in \eqref{speed} has its own interest. It is a strengthening of a result by Adamczewski \cite{Ad} about discrepancy of substitutive systems.

Even if the fractal curves studied by Dumont and Thomas in
\cite{DT1}, \cite{DT2} are not considered explicitly in the article, they were a source of inspiration for the authors. These curves correspond to the renormalization of the broken lines $\ell(x)$ and appear in subsection \ref{series} in another language.

In section \ref{pseudoAnosov}, we discuss the hypothesis of the main result in a geometric language. We exhibit many examples where our hypothesis on the matrix $R$ are fulfilled.

%%%%%%%%%%%%%%%%%%%%%%%%%%%%%%%%%%%%%%%%%%%%%%%%%
\section{Preliminaries}
%%%%%%%%%%%%%%%%%%%%%%%%%%%%%%%%%%%%%%%%%%%%%%%%%

\subsection{Words and sequences}

Let $A$ be a finite set. One calls it an {\it alphabet} and its elements {\it symbols}. 
A {\it word} is a finite sequence of symbols in $A$, $w=w_0\ldots w_{\ell-1}$. The length of $w$ is denoted $|w|=\ell$. One also defines the empty word $\varepsilon$.  The set of words in the alphabet $A$ is denoted $A^*$ and $A^+=A^*\setminus \{\varepsilon\}$.  We will need to consider words indexed 
by integer numbers, that is, $w=w_{-m} \ldots w_{-1}.w_0\ldots w_{\ell}$ where $\ell,m \in \NN$ and the dot separates negative and non-negative coordinates. If necessary we call them {\it dotted  words}.  

The set of one-sided infinite sequences $x=(x_i)_{i \in \NN}$ in $A$ is denoted by $A^\NN$. Analogously, $A^\ZZ$ is the set of two-sided infinite sequences 
$x=(x_i)_{i\in \ZZ}$. 

Given a sequence $x$ in $A^+$, $A^\NN$ or $A^\ZZ$ one denotes $x[i,j]$ the sub-word of $x$ appearing between indexes $i$ and $j$. Similarly one defines $x(-\infty,i]$ and $x[i,\infty)$. 
Let $w=w_{-m} \ldots w_{-1}.w_0\ldots w_{\ell}$ be a (dotted) word in $A$. One defines the cylinder 
set $[w]$ as $\{ x \in A^\ZZ : x[-m,\ell]=w \}$. 

The shift map $T:A^\ZZ \to A^\ZZ$ or $T:A^\NN \to A^\NN$ is given by $T(x)=(x_{i+1})_{i \in \NN}$ for 
$x=(x_i)_{i\in \NN}$. A subshift  is any shift invariant and closed (for the product topology) subset of $A^\ZZ$ or $A^\NN$. A subshift is minimal if all of its orbits by the shift are dense. 

In what follows we will use the shift map in several contexts, in particular restricted to a subshift. 
To simplify notations we keep the name $T$ all the time. 

\subsection{Substitutions and minimal points} \label{subst} We refer to \cite{Qu} and \cite{fogg} and references therein for the general theory of substitutions.

A {\it substitution} is a map  $\sigma: A \to A^+$. It naturally extends to $A^+$, $A^\NN$ and $A^\ZZ$; for
$x=(x_i)_{i\in \ZZ} \in A^{\ZZ }$ the extension is given by $$
\sigma(x)=\ldots \sigma(x_{-2})\sigma(x_{-1}). \sigma(x_0)\sigma(x_1) \ldots
$$
where the central dot separates negative and non-negative coordinates
of $x$. A further natural  convention is that  the image of the empty word
$\varepsilon$ is $\varepsilon$.

Let $M$ be the matrix with indices in $A$ such that $M_{ab}$ is the number of times letter $b$ appears in $\sigma(a)$ for any $a,b \in A$. The substitution is primitive if there is $N >0$ such that for any $a \in A$, $\sigma^N(a)$ contains any other letter of $A$ (here $\sigma^N$ means $N$ consecutive iterations of $\sigma$). Under primitivity one can assume without loss of generality that $M>0$.

Let $\xs \subseteq A^\ZZ$ be the subshift defined from $\sigma$. That is, 
$x \in \xs$ if and only if any subword of $x$ is a subword of $\sigma^N(a)$ for some $N \in \NN$ and $a \in A$. 
 
Assume $\sigma$ is primitive. Given a point $x \in \xs$ there exists a unique sequence $(p_i,c_i,s_i)_{i\in \NN} \in (A^* \times A \times A^*)^\NN$ such that for each $i \in \NN$: $\sigma(c_{i+1})=p_ic_is_i$ and
$$\ldots \sigma^3(p_3) \sigma^2(p_2)\sigma^1(p_1) p_0 . c_0 s_0 \sigma^1(s_1)\sigma^2(s_2)\sigma^3(s_3)\ldots$$
is the central part of $x$, where the dot separates negative and non-negative coordinates. This sequence is called the prefix-suffix decomposition of $x$ (see for instance \cite{CS}). 

If only finitely many suffixes $s_i$ are nonempty, then there exists $a \in A$ and non-negative integers $\ell$ and $q$ such that 
$$x[0,\infty)=c_0 s_0 \sigma^1(s_1)\ldots \sigma^\ell(s_\ell)  \lim_{n\to \infty} \sigma^{n q}(a)$$
Analogously,  if only finitely many $p_i$ are non empty, then 
$$x(-\infty,-1]=\lim_{n\to \infty} \sigma^{n p}(b) \sigma^m(p_m) \ldots \sigma^1(p_1) p_0$$
for some $b \in A$ and non-negative integers $p$ and $m$.

Let $\theta_1$ be the Perron-Frobenius eigenvalue of $M$. Let $\lambda=(\lambda(a): a \in A)^t$ be a strictly positive right eigenvector of $M$ associated to $\theta_1$.  We will also assume the following algebraic property that we call (AH): $M$ has an eigenvalue $\theta_2$ which is a conjugate of $\theta_1$. Notice that this property coincides with hypothesis
(\ref{algebraic-hyp}) of Theorem \ref{main}.

The following lemma are important consequences of the algebraic property (AH). 

\begin{lemma}\label{conjugate}
Let $\eta:\QQ[\theta_1] \to \QQ[\theta_2]$  be the field homomorphism that sends $\theta_1$ to $\theta_2$. The vector $\gamma = \eta(\lambda)=(\eta(\lambda(a)): a \in A)^t$ is an eigenvector of $M$ associated to $\theta_2$.
\end{lemma}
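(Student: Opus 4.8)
The plan is to exploit the fact that applying the field homomorphism $\eta$ to an algebraic identity produces another valid identity. Since $M$ is a matrix with integer entries and $\lambda$ is a right eigenvector with eigenvalue $\theta_1$, we have the system of equations $M\lambda = \theta_1 \lambda$, that is, $\sum_{b\in A} M_{ab}\lambda(b) = \theta_1 \lambda(a)$ for every $a\in A$. The entries $\lambda(a)$ lie in the number field $\QQ[\theta_1]$ (one can normalize the eigenvector so that this holds; for instance $\lambda(a)$ is a polynomial in $\theta_1$ with rational coefficients obtained from Cramer's rule applied to the linear system, since $\theta_1$ is a simple root of the characteristic polynomial by Perron--Frobenius). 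So the identity above takes place entirely inside $\QQ[\theta_1]$.

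Next I would apply the field homomorphism $\eta\colon\QQ[\theta_1]\to\QQ[\theta_2]$ to each of these $|A|$ equations. Since $\eta$ is a ring homomorphism fixing $\QQ$, it sends the integers $M_{ab}$ to themselves and is additive and multiplicative; hence $\eta\big(\sum_b M_{ab}\lambda(b)\big) = \sum_b M_{ab}\,\eta(\lambda(b))$ and $\eta(\theta_1\lambda(a)) = \eta(\theta_1)\eta(\lambda(a)) = \theta_2\,\eta(\lambda(a))$. Writing $\gamma(a) := \eta(\lambda(a))$, the transformed equations read $\sum_{b\in A} M_{ab}\gamma(b) = \theta_2\,\gamma(a)$ for all $a\in A$, which is exactly the statement that $\gamma = (\gamma(a):a\in A)^t$ satisfies $M\gamma = \theta_2\gamma$, i.e.\ $\gamma$ is an eigenvector of $M$ associated to $\theta_2$.

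The one point needing a little care is that $\gamma$ is genuinely an eigenvector, i.e.\ $\gamma \neq 0$. This follows because $\eta$ is a field homomorphism, hence injective: if $\gamma(a) = \eta(\lambda(a)) = 0$ for all $a$, then $\lambda(a) = 0$ for all $a$, contradicting that $\lambda$ is strictly positive. Thus $\gamma$ is a nonzero solution of $(M - \theta_2 I)\gamma = 0$, which is the claim. I expect no serious obstacle here; the only mild subtlety is the normalization of $\lambda$ so that its coordinates lie in $\QQ[\theta_1]$ rather than merely in $\RR$, but this is standard since the $\theta_1$-eigenspace is one-dimensional and defined over $\QQ$, so it has a basis vector with coordinates in $\QQ[\theta_1]$, and the strictly positive Perron--Frobenius eigenvector is a scalar multiple of it; rescaling by an element of $\QQ[\theta_1]$ keeps the coordinates in $\QQ[\theta_1]$ while preserving positivity.
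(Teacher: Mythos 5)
Your proof is correct and follows the same route as the paper's: apply the field homomorphism $\eta$ coordinatewise to the identity $M\lambda=\theta_1\lambda$, using that $M$ has integer entries and that $\lambda$ can be normalized to lie in $\QQ[\theta_1]^{|A|}$, to get $M\gamma=\theta_2\gamma$. You merely spell out two points the paper leaves implicit (the normalization of $\lambda$ and the nonvanishing of $\gamma$ via injectivity of $\eta$), which is fine.
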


\begin{proof}
The field homomorphism $\eta$ naturally extends to $\QQ[\theta_1] ^{|A|}$.
Since $\lambda$ belongs to $\QQ[\theta_1]^{|A|}$ (up to normalization), then one deduces that 
$M\eta(\lambda)=\theta_2 \eta(\lambda)$.  Thus, $\eta(\lambda)$ is an eigenvector of $M$ associated to $\theta_2$.
\end{proof}

\begin{lemma}\label{lem:alg}
Let $\gamma$ be the eigenvector of $M$ associated to $\theta_2$ as in Lemma \ref{conjugate}. Then for any $|A|$-tuple of non-negative integers $(n_a: a \in A)$,  $\sum_{a \in A} n_a \gamma(a) = 0$
implies  $n_a= 0$ for any $a \in A$.
\end{lemma}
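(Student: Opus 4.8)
The key point is that $\gamma(a) = \eta(\lambda(a))$ lies in $\QQ[\theta_2]$, and that $\theta_1$ and $\theta_2$ are conjugate algebraic numbers, so $\eta$ is a genuine field isomorphism $\QQ[\theta_1]\to\QQ[\theta_2]$. First I would suppose, for a contradiction, that $(n_a: a\in A)$ is a tuple of non-negative integers, not all zero, with $\sum_{a\in A} n_a\gamma(a)=0$. Applying the inverse isomorphism $\eta^{-1}:\QQ[\theta_2]\to\QQ[\theta_1]$ to this relation (the integers $n_a$ are fixed by $\eta^{-1}$) yields $\sum_{a\in A} n_a\lambda(a)=0$. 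But $\lambda$ is a \emph{strictly positive} eigenvector and the $n_a$ are non-negative integers not all zero, so $\sum_{a\in A} n_a\lambda(a)>0$, a contradiction. Hence all $n_a=0$.

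So the real content is simply that $\eta$ is injective (equivalently, bijective onto $\QQ[\theta_2]$), which is immediate from the fact that $\eta$ is a ring homomorphism between fields that is not identically zero: its kernel is an ideal of the field $\QQ[\theta_1]$, hence either $(0)$ or everything, and it is not everything since $\eta(1)=1$. This is where hypothesis (AH) — that $\theta_2$ is a \emph{conjugate} of $\theta_1$, i.e. a root of the same minimal polynomial over $\QQ$ — is used, because it is exactly what guarantees that the assignment $\theta_1\mapsto\theta_2$ extends to a well-defined field homomorphism in the first place (as already invoked in Lemma~\ref{conjugate}). One should also note that the positivity of $\lambda$ is essential and comes from the Perron--Frobenius theorem applied to $M>0$.

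There is no real obstacle here; the only thing to be careful about is to pull the relation back through $\eta^{-1}$ rather than trying to argue directly in $\QQ[\theta_2]$, where $\gamma$ need not have any sign properties (indeed $\theta_2$ is typically not the dominant eigenvalue, so $\gamma$ will have mixed signs, which is precisely why the statement is not trivially true in $\QQ[\theta_2]$ itself). The proof is therefore a two-line transport-of-structure argument: rationality of $\lambda$ up to normalization (so that $\eta$ applies), invertibility of the field homomorphism $\eta$, and strict positivity of the Perron--Frobenius eigenvector.
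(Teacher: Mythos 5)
Your proof is correct and follows essentially the same route as the paper: apply $\eta^{-1}$ to transport the relation $\sum_{a\in A} n_a\gamma(a)=0$ back to $\sum_{a\in A} n_a\lambda(a)=0$ and conclude from the strict positivity of the Perron--Frobenius eigenvector $\lambda$. The extra remarks on why $\eta$ is a genuine field isomorphism fixing the integers are a harmless elaboration of what the paper leaves implicit.
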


\begin{proof}
Assume $\sum_{a \in A} n_a \gamma(a)  = 0$. Since $\gamma=\eta(\lambda)$, applying 
$\eta^{-1}$ one gets that 
 $\sum_{a \in A} n_a \lambda(a) = 0$. This equality implies that $n_a = 0$ for every $a \in A$
because the coordinates of $\lambda$ are positive.
\end{proof}

Let  $\gamma=\eta(\lambda)$ as in Lemma \ref{conjugate}.  For 
$w=w_0 \ldots w_{l-1} \in A^+$ denote $\gamma(w)=\gamma (w_0)+\ldots+\gamma(w_{l-1})$. 

Let $x \in \xs$. Define $\gamma_0(x)=0$, $\gamma_ n(x)=\sum_{i=0}^{n-1}
\gamma(x_i)$ for $n >0$ and $\gamma_ n(x)=\sum_{i=n}^{-1} \gamma({x_i})$ for $n <0$. 
Put $\Gamma(x)=\{\gamma_n(x): n\in \ZZ\}$.
In a similar way, given a (dotted) word $w=w_{-m}\ldots  w_0\ldots w_{l-1}$ one defines 
$\gamma_0(w)=0$, $\gamma_ n(w)=\sum_{i=0}^{n-1} \gamma(w_i)$ for $0<n \leq l$, 
$\gamma_ n(w)=\sum_{i=n}^{-1} \gamma(w_i)$ for $-m \leq n <0$ and the set $\Gamma(w)$. 

The best occurrence of a symbol $a \in A$ in $w$ is 
$-m \leq i < l$  such that $w_i=a$ and 
$\gamma_{i+1}(w)=\min\{ \gamma_{j+1}(w): -m \leq j < l, w_j=a \}$.
By Lemma \ref{lem:alg}, under hypotheses (AH) this number is well defined and unique. 

One says $x$ is {\it minimal} if $\gamma_n(x) \geq  0$ for any $n \in \ZZ$. The set of minimal points for 
$\sigma$ is denoted by $\M(\sigma)$. It is important to mention that if $x$ is a minimal point of a substitution satisfying hypothesis (AH)  then, by Lemma \ref{lem:alg},  
$\gamma_n(x)>0$ whenever $n \not = 0$.

\subsection{Affine interval exchange transformations}

Let $0=a_0< a_1 < \ldots < a_{r-1} < a_r=1$ and  $A=\{1,\ldots,r\}$.

An {\it affine interval exchange transformation} (AIET) is a bijective map $\AIET: [0,1) \to [0,1)$  of the form $f(t)= w_i t + v_i$ if $t \in [a_{i-1},a_i)$ for $ i\in A$. The vector $w=(w_1,\ldots,w_r)$ 
is called the slope of $\AIET$. We assume furthermore the slope is strictly positive. 

An {\it interval exchange transformation} (IET) is an AIET with slope $w=(1,\ldots,1)$. Commonly an IET is given by a vector  $\lambda=(\lambda_1,\ldots, \lambda_r)$ such that 
$\lambda_i=|a_i-a_{i-1}|$ for $i \in A$ and a permutation $\pi$ of  $A$ which indicates the way intervals 
$[a_{i-1},a_i)$'s are rearranged by the IET. Clearly,  $a_i=\sum_{j=1}^i \lambda_j$. We use $\IET$ to refer to the IET associated to $\lambda$ and $\pi$.

One says the AIET $\AIET$ is semi-conjugated with the IET $\IET$ if there is a monotonic, surjective and continuous  map $h: [0,1)\to [0,1)$ such that $h\circ \AIET=\IET \circ h$.

Let $\IET$ be an interval exchange transformation. 
There is a natural symbolic coding of the orbit of any point $t \in [0,1)$  by $\IET$.  Consider the partition 
$\alpha=\{[0,a_1),\ldots,[a_{i-1},a_i),\ldots,[a_{r-1},1)\}$ and define 
$\phi(t)=(x_i)_{i\in \ZZ} \in A^\ZZ$ by $t_i=j$ if and only if $\IET^i(t) \in [a_{j-1},a_j)$. The set 
$\phi([0,1))$ is invariant for the shift but it is not necessarily closed, then one considers its closure 
$X=\overline{\phi([0,1))}$. This procedure produces  a semi-conjugacy (factor map) $\varphi: (X,T)\to 
([0,1),\IET)$. If $t$ is not in the orbit of the extreme points $0,a_1,\ldots,1$, then it has a unique preimage by $\varphi$. If not, it has at most two preimages corresponding to the coding  of $(\IET^i(\lim_{s\to t^-} s))_{i\in \ZZ}$.

We use freely concepts related to Rauzy-Zorich-Veech induction. Rauzy induction was defined  in  \cite{Ra}, extended to zippered rectangles by  Veech \cite{Ve}, and accelerated by  Zorich \cite{Zo}. For a complete description about the Rauzy-Veech-Zorich induction see also  the expository papers by Zorich \cite{Zo2} and Yoccoz \cite{Yo}.    

An IET  $\IET$ is {\it self-similar} if it can be recovered from itself after finitely many steps of Rauzy inductions (up to normalization). More precisely,  there exists a loop in the Rauzy diagram and an associated {\it Perron-Frobenius} matrix $R$ such that 
$$\theta_1\lambda = R \lambda$$
\noindent with $\theta_1$ the dominant eigenvalue of $R$. 
 
For a  self-similar IET $\IET$ there is a direct relation between the subshift $X$ and the matrix $R$ associated to  $\IET$. Indeed, there exists a substitution $\sigma: A \to A^+$ with
associated matrix $M= {^tR}$ such that $\xs=X$ (see \cite{Ca} and references therein). If the IET $\IET$ is minimal then the subshift $\xs$ is minimal too. In the sequel, we will use the fact that  the substitution $\sigma$ is primitive which implies that $\xs$ is minimal. Nevertheless, no specific property of substitutions obtained from $\IET$ will be needed for our purpose. 

The relation between self-similar IET and pseudo-Anosov diffeomorphisms is explained in \cite{Ve}. 

%%%%%%%%%%%%%%%%%%%%%%%%%%%%%%%%%%%%%%%%%%%%%%%%%
\section{Construction of minimal points} \label{minimalpoints}
%%%%%%%%%%%%%%%%%%%%%%%%%%%%%%%%%%%%%%%%%%%%%%%%%

Let $\sigma: A \to A^+ $ be a primitive substitution with associated matrix  $M > 0$. 
Let $\theta_1$, $\theta_2$, $\lambda$ and $\gamma$ be as in subsection \ref{subst}. In addition, assume $\theta_2$ verifies the hypotheses of Theorem \ref{main}. By Perron-Frobenius theorem, $\gamma$ has  negative and positive coordinates. The main objective of the section is to give a combinatorial construction of minimal points in this case. 

\vfill\eject

\subsection{Existence of minimal points} 

\begin{lemma}\label{lem:growth}
Let $a \in A$ such that $\gamma(a)>0$ and $n\in \NN$. 
Write $\sigma^n(a)=p_ns_n$ where the minimum of $\Gamma(\sigma^n(a))$  is 
attained at $\gamma_i(\sigma^n(a))$ and $i=|p_n|$. Then $\gamma(s_n) \geq \theta_2^n \gamma(a)$. In particular $|s_n|$ grows exponentially fast with $n$. 
\end{lemma}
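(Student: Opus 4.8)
The plan is to exploit the fact that $\gamma$ is a right eigenvector of $M$ for $\theta_2$, which translates into a multiplicativity property of $\gamma$ under $\sigma$. Concretely, for any letter $b \in A$ one has $\gamma(\sigma(b)) = \sum_{c\in A} M_{bc}\,\gamma(c) = (M\gamma)_b = \theta_2\,\gamma(b)$, and summing over the letters of an arbitrary word $w$ gives $\gamma(\sigma(w)) = \theta_2\,\gamma(w)$. Iterating, $\gamma(\sigma^n(w)) = \theta_2^n\,\gamma(w)$ for every $w \in A^+$ and every $n \geq 0$. Applied to $w = a$ this yields $\gamma(\sigma^n(a)) = \theta_2^n\,\gamma(a)$.

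Next I would use the defining property of the decomposition $\sigma^n(a) = p_n s_n$. Writing $\sigma^n(a) = u_0 u_1 \ldots u_{L-1}$ with $L = |\sigma^n(a)|$, the set $\Gamma(\sigma^n(a))$ consists of the partial sums $\gamma_0(\sigma^n(a)) = 0,\ \gamma_1(\sigma^n(a)),\ \ldots,\ \gamma_L(\sigma^n(a)) = \gamma(\sigma^n(a))$. Since the minimum of this set is attained at the index $i = |p_n|$ and $0 = \gamma_0(\sigma^n(a))$ belongs to the set, we get $\gamma(p_n) = \gamma_i(\sigma^n(a)) \leq 0$. Hence, from $\gamma(p_n) + \gamma(s_n) = \gamma(\sigma^n(a)) = \theta_2^n\,\gamma(a)$ we conclude $\gamma(s_n) = \theta_2^n\,\gamma(a) - \gamma(p_n) \geq \theta_2^n\,\gamma(a)$, which is the desired inequality; since $\theta_2 > 1$ and $\gamma(a) > 0$ this forces in particular $s_n \neq \varepsilon$.

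For the exponential growth of $|s_n|$, I would bound $\gamma(s_n) \leq |s_n|\,\max_{b\in A}|\gamma(b)|$, so that $|s_n| \geq \theta_2^n\,\gamma(a)\,/\,\max_{b\in A}|\gamma(b)|$, and the right-hand side tends to infinity geometrically because $\theta_2 > 1$.

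There is no substantial obstacle here: the whole argument rests on the single observation that $\gamma$ behaves multiplicatively under $\sigma$ (the eigenvector relation) together with the trivial remark that the least partial sum is $\le 0$. The only point requiring a little care is to check that the minimum defining $\Gamma(\sigma^n(a))$ ranges over $0 \le i \le L$, so that $\gamma_0 = 0$ is a legitimate competitor and forces $\gamma(p_n) \le 0$; and to notice that the strict inequality $\gamma(\sigma^n(a)) > 0$ rules out $i = L$, so that $s_n$ is genuinely nonempty.
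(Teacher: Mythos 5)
Your proof is correct and follows exactly the paper's argument: the eigenvector relation gives $\gamma(p_n)+\gamma(s_n)=\gamma(\sigma^n(a))=\theta_2^n\gamma(a)$, and since $0\in\Gamma(\sigma^n(a))$ the minimality of $\gamma_{|p_n|}$ forces $\gamma(p_n)\leq 0$, which is precisely the paper's two-line proof (your eigenvector identity is the paper's later Lemma \ref{lem:profile}). The extra details you supply (ruling out $s_n=\varepsilon$ and bounding $|s_n|\geq\theta_2^n\gamma(a)/\max_{b\in A}|\gamma(b)|$) are correct fillings-in of what the paper leaves implicit.
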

\begin{proof}
Observe that $\gamma(p_n)+\gamma(s_n)= \theta_2^n \gamma(a)$ and 
$\gamma(p_n) \leq 0$.
\end{proof} 

\begin{lemma}
$\M(\sigma) \not = \emptyset$
\end{lemma}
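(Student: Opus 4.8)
The plan is to build a minimal point by the standard "fixed point of the substitution through well-chosen prefix-suffix data" construction, using Lemma~\ref{lem:growth} to control the sign of the partial sums $\gamma_n$. First I would pick a letter $a \in A$ with $\gamma(a) > 0$; such a letter exists because by Perron-Frobenius $\gamma$ has both signs (and by Lemma~\ref{lem:alg} no coordinate is zero). For each $n$, Lemma~\ref{lem:growth} writes $\sigma^n(a) = p_n s_n$ where the global minimum of $\Gamma(\sigma^n(a))$ occurs exactly at the cut point $|p_n|$, so that $s_n$ is a suffix of $\sigma^n(a)$ all of whose partial sums (from its own left end) are $\geq 0$, and in fact $\gamma(s_n) \geq \theta_2^n \gamma(a) \to \infty$. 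The idea is that the words $s_n$, suitably nested, converge to a one-sided infinite sequence whose partial sums stay nonnegative, and symmetrically on the left, using a letter $b$ with $\gamma(b) < 0$ (equivalently, a letter where the \emph{maximum} of $\Gamma(\sigma^n(b))$ is attained at a cut point, giving a prefix with all partial sums $\leq 0$, which read backwards is what we want for the negative coordinates).

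The key step is the nesting. I would first arrange, by primitivity, that some fixed letter appears at the appropriate controlled position. Concretely: for each $n$, $s_n$ begins with some letter $c_n \in A$; by pigeonhole there is a letter $c$ with $c_n = c$ for infinitely many $n$, and one extracts a subsequence along which the prefixes of the $s_n$ stabilize letter by letter, yielding a one-sided sequence $x[0,\infty)$ starting with $c$ such that every prefix $x[0,k]$ is a prefix of some $s_{n}$, hence $\gamma_{k+1}(x) \geq 0$ for all $k \geq 0$. The cleanest way to make this rigorous is to observe that, since $s_n$ is a suffix of $\sigma^n(a)$ and $\Gamma$-minimality of the cut is inherited under applying $\sigma$ to the letter preceding the cut, one can choose the $s_n$ so that $s_{n+1}$ begins with $s_n$ (track a single letter $c$ with $\gamma(c) \geq 0$ realizing the minimum and set $x[0,\infty) = \lim_n \sigma^n(c')$ for an appropriate $c'$); then define $x(-\infty,-1]$ symmetrically from the maxima, choosing the junction at $0$ so that the two halves fit inside $\xs$. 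One must check $x \in \xs$: every finite subword of $x$ lies in some $s_n$ or $p_n$, hence in $\sigma^n(a)$ resp. $\sigma^n(b)$, so it is a subword of a power of $\sigma$ applied to a letter, which is exactly the defining condition for $\xs$.

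Finally one verifies minimality directly: for $n > 0$, $\gamma_n(x) = \gamma(x[0,n-1])$, and $x[0,n-1]$ is a prefix of $s_m$ for $m$ large, whose partial sums from the left are all $\geq 0$ by construction (the cut at $|p_m|$ being the location of the minimum of $\Gamma(\sigma^m(a))$ forces $\gamma_j(s_m) \geq 0$ for all $j$); symmetrically $\gamma_n(x) \geq 0$ for $n < 0$. Hence $x \in \M(\sigma)$, so $\M(\sigma) \neq \emptyset$.

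The main obstacle is the bookkeeping at the junction between the negative and non-negative coordinates: one needs the chosen left-infinite suffix-of-prefixes and right-infinite prefix-of-suffixes to be compatible, i.e.\ to form a legal two-sided word in $\xs$, and one must be careful that the "minimum attained at the cut" property of $\sigma^n(a)$ does indeed pass to the limit and give \emph{all} partial sums nonnegative rather than merely the single one at the cut. Handling this cleanly is exactly where the prefix-suffix formalism of subsection~\ref{subst} and the inductive relation $\sigma(c_{i+1}) = p_i c_i s_i$ do the work; everything else is the exponential growth bound already recorded in Lemma~\ref{lem:growth}.
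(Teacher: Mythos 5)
Your construction of each half separately is where the argument breaks. You build the right-infinite half from suffixes $s_n$ of $\sigma^n(a)$ (cut at the minimum of $\Gamma(\sigma^n(a))$ for a letter $a$ with $\gamma(a)>0$) and the left-infinite half from prefixes of $\sigma^n(b)$ (cut at the maximum, for a letter $b$ with $\gamma(b)<0$), and then you glue them at the origin. But nothing guarantees that this concatenation is a point of $\xs$: the defining condition requires \emph{every} subword of $x$ to occur in some $\sigma^N(d)$, and your verification (``every finite subword of $x$ lies in some $s_n$ or $p_n$'') is simply false for subwords straddling coordinates $-1$ and $0$, since those lie in neither half. You correctly flag this junction problem as ``the main obstacle,'' but then defer it to ``the prefix-suffix formalism,'' which is not an argument; as stated, the lemma is not proved. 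A secondary, smaller issue: the claim that one can choose the $s_n$ so that $s_{n+1}$ begins with $s_n$ is not justified either — the propagation of the minimum under $\sigma$ (Lemma \ref{lem:fractalgamma}) gives nested central cylinders around the minimum, with possibly changing central letters, not literal prefix-extension; your fallback of extracting a subsequence by pigeonhole does work for a one-sided limit, but it does not repair the junction.

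The paper's proof avoids the gluing problem entirely by never separating the two sides: since $\gamma$ takes both signs and $\xs$ is minimal, there is a legal two-letter word $bc$ occurring in $\xs$ with $\gamma(b)<0$ and $\gamma(c)>0$; one then considers the legal dotted words $u_n=\sigma^n(b).\sigma^n(c)$ and recenters each at the \emph{global} minimum of $\Gamma(u_n)$. Recentering at the global minimum of a single legal word makes all partial sums nonnegative in both directions simultaneously, and legality of the recentered word is automatic because it is a subword of a point of $\xs$. Lemma \ref{lem:growth} is then used to guarantee that, along a subsequence, both the left and right parts of the recentered words have lengths tending to infinity, and a compactness argument produces a limit point $x\in\xs$ with $\Gamma(x)\subseteq\RR^+$, i.e.\ a minimal point. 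If you want to salvage your version, you should replace the two independent one-sided constructions by this single recentering of $\sigma^n(b).\sigma^n(c)$ (or otherwise exhibit an actual legal two-sided word containing your two halves around the origin); the rest of your outline — sign considerations, exponential growth from Lemma \ref{lem:growth}, compactness, and passage of nonnegativity to the limit — is in line with the paper.
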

\begin{proof}
Since $\gamma$ has positive and negative coordinates and $\xs$ is minimal,  then there exist  $b,c \in A$ such that  $bc$ is a subword  of a point in $\xs$ and $\gamma(b)<0, \gamma(c)>0$ holds. 

Let $n \geq 0$ and define $u_n=\sigma^n(b).\sigma^n(c)$. The sequence $\Gamma(u_n)$ attains its minimum at some $N_n \in \{-|\sigma^n(b)|,\ldots,-1,0,\ldots,|\sigma^n(c)|\}$. Define the (dotted)word  
$v_n=u_n[-|\sigma^n(b)|,N_n-1].u_n[N_n,|\sigma^n(c)|-1]=v_n^-.v_n^+$.  The minimum of $\Gamma(v_n)$ is attained at coordinate $0$, and is equal to $0$. 

By Lemma \ref{lem:growth} there is a subsequence $(n_i)_{i\in \NN}$ such that $$\lim_{i \to \infty} |v_{n_i}^-|=\lim_{i \to \infty} |v_{n_i}^+|=\infty$$ By compactness and eventually taking once again a  subsequence there exists $x\in X$ such that
for any $m \in \NN$  there is $i\in \NN$ with $n_i \geq m$ and $x \in [v_{n_i}^-.v_{n_i}^+]$. 
Thus $\Gamma(x[-m,m]) \subseteq \RR^+$ and its minimum is zero at zero coordinate. This implies $x \in \M(\sigma)$. 
\end{proof}

\subsection{The {\it best strategy} algorithm} 
In what follows we develop a procedure to construct {\it minimal} points that will become useful in next subsections. 

The following two lemma follow directly from equality $M\gamma=\theta_2 \gamma$. Their simple proofs are left to the reader. 

\begin{lemma}\label{lem:profile}
Let $m \in \NN$ and $w  \in A^+$. Then 
$\gamma(\sigma^{m}(w))=\theta_2^m \gamma(w)$. 
\end{lemma}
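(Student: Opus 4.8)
The plan is to reduce the statement to the single-letter identity $\gamma(\sigma(a)) = \theta_2\,\gamma(a)$ and then propagate it, first by additivity over words and then by induction on $m$. The only nontrivial input is the eigenvalue relation $M\gamma = \theta_2\gamma$, which is precisely the content of Lemma \ref{conjugate}; everything else is a bookkeeping consequence of $M$ being the transition matrix of $\sigma$.

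First I would unwind the definition of $M$ at the level of a single letter. For $a \in A$, writing $\sigma(a) = c_0 \ldots c_{k-1}$, the number of occurrences of a letter $b$ in $\sigma(a)$ is $M_{ab}$ by definition, so
\[
\gamma(\sigma(a)) \;=\; \sum_{j=0}^{k-1} \gamma(c_j) \;=\; \sum_{b \in A} M_{ab}\,\gamma(b) \;=\; (M\gamma)_a .
\]
Combining this with $M\gamma = \theta_2 \gamma$ yields $\gamma(\sigma(a)) = \theta_2\,\gamma(a)$ for every $a \in A$.

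Next I would extend this to arbitrary words using that $\gamma$ is additive along concatenations. If $w = w_0 \ldots w_{l-1} \in A^+$, then $\sigma(w) = \sigma(w_0)\cdots\sigma(w_{l-1})$, hence
\[
\gamma(\sigma(w)) \;=\; \sum_{i=0}^{l-1} \gamma(\sigma(w_i)) \;=\; \theta_2 \sum_{i=0}^{l-1}\gamma(w_i) \;=\; \theta_2\,\gamma(w).
\]
This is the case $m=1$ (the case $m=0$ being trivial since $\sigma^0$ is the identity). Finally I would induct on $m$: assuming $\gamma(\sigma^m(w)) = \theta_2^m\gamma(w)$ for all words $w$, apply the $m=1$ identity to the word $\sigma^m(w) \in A^+$ to obtain $\gamma(\sigma^{m+1}(w)) = \gamma(\sigma(\sigma^m(w))) = \theta_2\,\gamma(\sigma^m(w)) = \theta_2^{m+1}\gamma(w)$. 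There is no genuine obstacle here; the only point to keep track of is that $\sigma$ maps $A^+$ into $A^+$, which is what makes the word-level identity legitimate to iterate.
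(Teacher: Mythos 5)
Your proof is correct and is exactly the argument the paper has in mind: the authors leave this lemma to the reader, noting it follows directly from $M\gamma=\theta_2\gamma$, and your reduction to the single-letter identity via the definition of $M$, followed by additivity over concatenation and induction on $m$, is that intended argument.
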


\begin{lemma}\label{lem:fractalgamma}
Let $w=w_0\ldots w_{l-1} \in A^+$. Write $\sigma(w)=\sigma(w_0)\ldots \sigma(w_{l-1})$. 
The minimum of $\Gamma(\sigma(w))$  is attained in a coordinate corresponding  to some $\sigma(w_i)$, where 
$w_i$ is the best occurrence of this symbol in $w$. 
\end{lemma}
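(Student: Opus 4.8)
The plan is to split the broken line of $\sigma(w)$ into the blocks $\sigma(w_0),\dots,\sigma(w_{l-1})$ and to locate its lowest vertex block by block. Write $\sigma(w)=\sigma(w_0)\cdots\sigma(w_{l-1})$. Any vertex $k$ of $\sigma(w)$ (i.e. $0\le k\le|\sigma(w)|$) lies in a unique block $\sigma(w_i)$, say at internal position $j$ with $0\le j\le|\sigma(w_i)|$, and then
\[
\gamma_k(\sigma(w))=\gamma\bigl(\sigma(w_0\cdots w_{i-1})\bigr)+\gamma_j(\sigma(w_i)).
\]
By Lemma~\ref{lem:profile} applied with $m=1$ one has $\gamma(\sigma(w_0\cdots w_{i-1}))=\theta_2\,\gamma(w_0\cdots w_{i-1})=\theta_2\,\gamma_i(w)$, so $\gamma_k(\sigma(w))=\theta_2\,\gamma_i(w)+\gamma_j(\sigma(w_i))$.

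Next I would minimize over $j$ inside a fixed block. For a symbol $a\in A$ set $m(a)=\min\Gamma(\sigma(a))$; since $\gamma_0(\sigma(a))=0$ we have $m(a)\le 0$, and --- this is the point --- $m(a)$ depends only on the symbol $a$, not on the position it occupies in $w$. Hence the least value attained by $\gamma_k(\sigma(w))$ over the vertices $k$ lying inside block $i$ is $\theta_2\,\gamma_i(w)+m(w_i)$, and therefore
\[
\min\Gamma(\sigma(w))=\min_{0\le i\le l-1}\bigl(\theta_2\,\gamma_i(w)+m(w_i)\bigr)=\min_{a}\Bigl(m(a)+\theta_2\min_{\,i:\,w_i=a}\gamma_i(w)\Bigr),
\]
the outer minimum running over the symbols $a$ occurring in $w$; here $\theta_2>0$ (hypothesis~(\ref{sup1-hyp}) of Theorem~\ref{main}) lets us pull the positive factor out of the inner minimum.

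It then remains to identify the inner minimizer with a best occurrence. For a fixed $a$, along the positions $i$ with $w_i=a$ the value $\gamma_{i+1}(w)=\gamma_i(w)+\gamma(a)$ differs from $\gamma_i(w)$ by the constant $\gamma(a)$, so minimizing $\gamma_i(w)$ over those positions is the same as minimizing $\gamma_{i+1}(w)$ over them; by definition the minimizer is the best occurrence of $a$ in $w$, and it is unique by Lemma~\ref{lem:alg}. Chasing the equalities back, any vertex of $\sigma(w)$ at which $\min\Gamma(\sigma(w))$ is attained lies in a block $\sigma(w_i)$ whose index $i$ is the best occurrence in $w$ of the symbol $w_i$, which is precisely the claim. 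The only step needing a bit of care --- the nearest thing to an obstacle --- is the bookkeeping at the junctions between consecutive blocks, where one vertex is at once the last vertex of $\sigma(w_i)$ and the first vertex of $\sigma(w_{i+1})$, together with the possibility that $m(a)$ is realized at an endpoint of $\Gamma(\sigma(a))$; a direct check shows the conclusion holds whichever of the two blocks one assigns such a vertex to, so the argument goes through routinely.
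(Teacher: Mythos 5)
Your argument is correct: the block decomposition $\gamma_k(\sigma(w))=\theta_2\gamma_i(w)+\gamma_j(\sigma(w_i))$, the use of $\theta_2>1>0$ to compare occurrences of the same symbol, and the reduction of ``minimize $\gamma_i(w)$'' to ``minimize $\gamma_{i+1}(w)$'' (with uniqueness from Lemma~\ref{lem:alg}) is exactly the intended ``follows directly from $M\gamma=\theta_2\gamma$'' argument, which the paper leaves to the reader. Your remark about junction vertices is also handled correctly, since either assignment of such a vertex forces the corresponding block's symbol to sit at its best occurrence.
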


%\begin{figure}
%\caption{This figure illustrates the proof of Lemma  \ref{lem:fractalgamma}. 
%\label{fig:lemmas}}
%\end{figure}

\subsubsection{The basic procedure} The following procedure will allow to construct 
the prefix-suffix decomposition of a minimal point. 

{\bf Step 0:} For each $a \in A$ write $\sigma(a)=p_0^{a,0} c_0^{a,0} s_0^{a,0}$ where 
$\Gamma(\sigma(a))$ attains its minimum at $\gamma_{|p_0(a)|}(\sigma(a))$.

{\bf Step 1:} Let $a \in A$. By Lemma \ref{lem:fractalgamma}, the minimum of 
$\Gamma(\sigma^2(a))$ comes from $\sigma(b)$ for some $b\in A$ in its best occurrence in $\sigma(a)$.  Write $\sigma(a)=p_1^{a,1} c_1^{a,1} s_1^{a,1}$ where $c_1^{a,1}=b$ is the best occurrence of $b$ in $\sigma(a)$. Put 
$w_1(a)= \sigma(p_1^{a,1}) p_0^{b,0} . c_0^{b,0} s_0^{b,0} \sigma(s_1^{a,1})$, where the dot  separates negative and non-negative coordinates. Let $p_0^{a,1}=p_0^{b,0}$, $c_0^{a,1}=c_0^{b,0}$ and 
$s_0^{a,1}=s_0^{b,0}$. The sequence $(p_i^{a,1},c_i^{a,1},s_i^{a,1})_{i=0}^ 1$ is called the best strategy for symbol $a$ at step 1. By construction $\Gamma(w_1(a))\subseteq \RR^+$ and the minimum is equal to zero at coordinate zero.

{\bf Step n+1:} assume in previous step we have constructed for each symbol $a \in A$ the best strategy 
$(p^{a,n}_i,c^{a,n}_i,s^{a,n}_i)_{i=0}^ n$. This sequence verifies: 

(i) for $0 \leq i \leq n$, $\sigma(c_{i+1}^{a,n})=p_{i}^{a,n} c_{i}^{a,n} s_{i}^{a,n}$ 
(here $c_{n+1}^{a,n}=a$). Moreover, each $c_i^{a,n}$ is the best occurrence of this symbol in 
$\sigma(c_{i+1}^{a,n})$. 

(ii) $\Gamma(w_n(a)) \subseteq \RR^+$ and its minimum is zero at zero coordinate, where 
$$w_n(a)=\sigma^{n+1}(a)=\sigma^n(p_{n}^{a,n})\ldots \sigma(p_1^{a,n})  p_0^{a,n}. c_0^{a,n}  s_0^{a,n} 
\sigma(s_{1}^{a,n})\ldots \sigma^n(s_n^{a,n})$$

Now we proceed as in step 1.  Consider $a \in A$.  By Lemma \ref{lem:fractalgamma}, 
the minimum of $\Gamma(\sigma^{n+2}(a))$ comes from $\sigma^{n+1}(b)$ for some $b\in A$ in its best occurrence in $\sigma(a)$. Write $\sigma(a)=p_{n+1}^{a,n+1} c_{n+1}^{a,n+1} s_{n+1}^{a,n+1}$ where 
$c_{n+1}^{a,n+1}=b$ is the best occurrence of $b$ in $\sigma(a)$. The finite sequence 
$(p^{a,n+1}_i,c^{a,n+1}_i,s^{a,n+1}_i)_{i=0} ^{n+1}$ where 
$(p^{a,n+1}_i,c^{a,n+1}_i,s^{a,n+1}_i)=$ \break $(p^{b,n}_i,c^{b,n}_i,s^{b,n}_i)$ for $0\leq i \leq n$ is a best strategy for $a$ at step $n+1$ and 
verifies conditions (i) and (ii) by construction.

\subsubsection{Finitely many  minimal points.}
For each $a \in A$ and $n \in \NN$ consider the cylinder set $C^{a,n}=[w_n(a)]$, where $w_n(a)$ is the dotted word defined in previous subsection. It is clear from the basic procedure that for any $a \in A$ and $n\in \NN$ there exists a unique $b \in A$ such that $C^{a,n+1}\subseteq C^{b,n}$. Thus, by compactness, there exist at most $|A|$ infinite decreasing sequences of the form $(C^{a_n,n})_{n \in \NN}$. Let $C_1,\ldots,C_{\ell}$ with $\ell \leq |A|$ be the collection of intersections of such sequences.  
 Remark that such sets are finite. 

Given a minimal point $x \in X$ with prefix-suffix decomposition 
$(p_i,c_i,s_i)_{i\in \NN}$ and $n \in \NN$, there is 
$a_n \in A$ such that $(p_i,c_i,s_i)=(p_i^{a_n,n},c_i^{a_n,n},s_i^{a_n,n})$ for $0\leq i \leq n$. 
Therefore,  $x \in C_i=\bigcap_{n \in \NN} C^{a_n,n}$ for some $1\leq i \leq \ell$. 

The following proposition is plain.
\begin{prop}\label{prop:finitenumber}
There are finitely many minimal points.
\end{prop}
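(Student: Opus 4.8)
The plan is to leverage the two structural facts already established in the preceding subsections: first, that every minimal point is captured by one of the finitely many nested sequences $C_1, \ldots, C_\ell$ of cylinders $C^{a,n} = [w_n(a)]$; and second, that each $C_i = \bigcap_{n \in \NN} C^{a_n, n}$ is a \emph{finite} set. Combining these two observations, the set of minimal points $\M(\sigma)$ is contained in $C_1 \cup \cdots \cup C_\ell$, which is a finite union of finite sets, hence finite.

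More precisely, first I would recall why each $C_i$ is finite. Fix a nested sequence $(C^{a_n, n})_{n \in \NN}$ with $C^{a_{n+1}, n+1} \subseteq C^{a_n, n}$, and let $C_i = \bigcap_n C^{a_n, n}$. Each $C^{a_n, n} = [w_n(a_n)]$ where $w_n(a_n) = \sigma^{n+1}(a_n)$ viewed as a dotted word with $\Gamma(w_n(a_n)) \subseteq \RR^+$ and minimum zero at coordinate zero; by Lemma \ref{lem:growth} applied to the best-strategy decomposition, both the negative part $\sigma^n(p_n^{a_n,n}) \cdots p_0^{a_n,n}$ and the non-negative part $c_0^{a_n,n} s_0^{a_n,n} \cdots \sigma^n(s_n^{a_n,n})$ have lengths tending to infinity — the non-negative part because $\gamma(s_n^{a_n,n})$ (and hence $|s_n^{a_n,n}|$) grows like $\theta_2^n$ whenever the relevant symbol has positive $\gamma$-value, and symmetrically on the negative side using that $\gamma$ has both signs and primitivity forces both growths to occur infinitely often along the sequence. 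Since the $w_n(a_n)$ are nested dotted words whose lengths on both sides diverge, there is a \emph{unique} two-sided sequence $x^{(i)} \in A^\ZZ$ with $x^{(i)} \in C^{a_n, n}$ for all $n$; thus $C_i = \{x^{(i)}\}$ is a single point (in particular finite). [If one prefers not to claim divergence on both sides for every sequence, it suffices to note that $C_i$ is a closed subset of $\xs$ on which the $\sigma$-prefix-suffix structure is completely pinned down by the best strategy, forcing finiteness.]

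Next I would verify the containment $\M(\sigma) \subseteq C_1 \cup \cdots \cup C_\ell$. This is exactly the argument already sketched in the paragraph preceding the proposition: given $x \in \M(\sigma)$ with prefix-suffix decomposition $(p_i, c_i, s_i)_{i \in \NN}$, minimality forces $\Gamma(x) \subseteq \RR^+$ with the global minimum $0$ attained only at coordinate $0$ (using Lemma \ref{lem:alg} to get strict positivity off $0$). One then checks by induction on $n$ that the truncation $(p_i, c_i, s_i)_{i=0}^n$ must coincide with the best strategy $(p_i^{a_n, n}, c_i^{a_n, n}, s_i^{a_n, n})_{i=0}^n$ for the symbol $a_n = c_{n+1}$: indeed, at each level the central block $w_n(a_n) = \sigma^{n+1}(a_n)$ must have its $\Gamma$-minimum at coordinate zero, and by Lemma \ref{lem:fractalgamma} this minimum propagates through the best occurrence, which is unique by Lemma \ref{lem:alg}; any other choice of prefix-suffix data would place the minimum of $\Gamma(x[-|p|, |cs|])$ at a nonzero coordinate, contradicting $x \in \M(\sigma)$. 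Hence $x \in \bigcap_n C^{a_n, n} = C_j$ for some $j$.

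Finally, assembling the pieces: $\M(\sigma) \subseteq \bigcup_{j=1}^\ell C_j$ with $\ell \leq |A|$ and each $C_j$ finite, so $\M(\sigma)$ is finite. I expect the only genuinely delicate point to be the finiteness of each $C_j$ — i.e., ruling out that a nested sequence of cylinders $C^{a_n,n}$ fails to shrink on one side — which is where the sign condition on $\gamma$ (Perron--Frobenius giving both positive and negative coordinates) together with primitivity of $\sigma$ and the exponential growth from Lemma \ref{lem:growth} must be invoked carefully; the containment $\M(\sigma) \subseteq \bigcup C_j$ is essentially bookkeeping already done above.
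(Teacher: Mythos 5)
Your argument is essentially the paper's: the proposition is derived directly from the preceding paragraphs, namely that a minimal point's prefix-suffix data must coincide with a best strategy at every level, hence every minimal point lies in one of the at most $|A|$ intersections $C_1,\ldots,C_\ell$, each of which is finite. The one caveat is your stronger claim that each $C_i$ is a single point because both sides of $w_n(a_n)$ grow --- this need not hold (all the prefixes $p_i^{a_n,n}$ may be empty, as in the degenerate situation handled in part (2) of the proof of Lemma \ref{lem:separation}), but your hedged fallback to mere finiteness of the $C_i$ is exactly what the paper uses, and the paper itself only records that finiteness as an unproved remark.
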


We will see later that minimal points have ultimately periodic prefix-suffixe decom- 
position. This fact yields to an alternative proof of previous proposition. 

\subsection{Serie associated to a minimal point}\label{series}

Define $\SFTU=\{(p_i,c_i,s_i)_{i\in \NN} : \forall \ i >0, \  \sigma(c_i)=p_{i-1}c_{i-1}s_{i-1} \}$ and 
$\SFTD=\{(p_i,c_i,s_i)_{i\in \NN} : \forall \ i \geq 0, \  \sigma(c_i)=p_{i+1}c_{i+1}s_{i+1} \}$. Observe that finite sequences taken from sequences in $\SFTU$ and $\SFTD$ coincide once reversed. 

Let $a \in A$ and $n \geq 1$. Then $\sigma^n(a)$ can be decomposed as 
$$\sigma^n(a)=\sigma^{n-1}(p_1) \ldots \sigma(p_{n-1})  p_n c_n s_n \sigma(s_{n-1}) \ldots \sigma^{n-1}(s_1) $$
where for all $1 \leq i \leq n$, $\sigma(c_{i-1})=p_ic_is_i$ (we have considered $c_{0}=a$). This decomposition is not unique. To $a$ and the finite sequence $(p_i,c_i,s_i)_{i=1}^n$ one associates the finite sum:
$$v(a; (p_i,c_i,s_i)_{i=1}^n)=\sum_{i=1}^n \theta_2^{-i} \gamma(p_i)$$
Clearly, 
given ${\bx}=(p^\bx_i,c^\bx_i,s^\bx_i)_{i \in \NN} \in \SFTD$ with $c^\bx_0=a$, the series 
$$v(a;{\bx})=\lim_{n\to \infty} v(a;(p^\bx_i,c^\bx_i,s^\bx_i)_{i=1}^n)=\sum_{i\geq 1} \theta_2^{-i} \gamma(p^\bx_i)$$ exists.

Let $v(a)=\min\{v(a;{\bx}): {\bx} \in \SFTD \text{ with } c^\bx_0=a\}$. A sequence  ${\bx} \in \SFTD$ with $c^\bx_0=a$ such that $v(a;{\bx})=v(a)$ is said to be {\it minimal for $a$}.

The best strategy for symbol $a$ at step $n \geq 1$ given by the  algorithm produces a finite sequence $(p^{a,n}_i,c^{a,n}_i,s^{a,n}_i)_{i=0}^n$.  Set 
$v_n(a)=\sum_{i=0}^n \theta^{-n+i-1}  \gamma(p_i^{a,n})$. It follows that 
$v_n(a)=v(a; (p^{a,n}_{n-i},c^{a,n}_{n-i},s^{a,n}_{n-i})_{i=0}^n)$. 

\begin{lemma}\label{lem:v(a)exists}
For every $a \in A$ and $n \geq 1$,  $v_n(a)$ is minimal among the \break $v(a; (p_i,c_i,s_i)_{i=1}^{n+1})$ and 
$v(a)=\lim_{n\to\infty}v_n(a)$.
\end{lemma}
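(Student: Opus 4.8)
The plan is to read $v(a;\cdot)$ off the broken line of $\sigma^{m}(a)$: an admissible finite sequence $(p_i,c_i,s_i)_{i=1}^{m}$ (one with $c_0=a$ and $\sigma(c_{i-1})=p_ic_is_i$ for $1\leq i\leq m$) is just a pointer to a vertex of that line, and $v_n(a)$ is the value at its lowest vertex. Precisely, such a sequence carries the same information as the integer $k=\sum_{i=1}^{m}|\sigma^{m-i}(p_i)|\in\{0,\ldots,|\sigma^{m}(a)|-1\}$, the position of the distinguished letter $c_{m}$ inside $\sigma^{m}(a)=\sigma^{m-1}(p_1)\cdots\sigma(p_{m-1})\,p_{m}c_{m}s_{m}\,\sigma(s_{m-1})\cdots\sigma^{m-1}(s_1)$ (conversely $k$ is decoded by descending through the $m$ nested levels of $\sigma^{m}(a)$), and by Lemma \ref{lem:profile}
$$\gamma_k\bigl(\sigma^{m}(a)\bigr)=\sum_{i=1}^{m}\gamma\bigl(\sigma^{m-i}(p_i)\bigr)=\sum_{i=1}^{m}\theta_2^{m-i}\gamma(p_i)=\theta_2^{m}\,v\bigl(a;(p_i,c_i,s_i)_{i=1}^{m}\bigr).$$
Hence, writing $\mu_m(a)=\min\{\gamma_k(\sigma^{m}(a)):0\leq k<|\sigma^{m}(a)|\}$, the minimum of $v(a;\cdot)$ over admissible sequences of length $m$ equals $\theta_2^{-m}\mu_m(a)$.

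For the first assertion I would invoke that, by construction, the best strategy algorithm places the letter $c_0^{a,n}$ at a lowest vertex of $\sigma^{n+1}(a)$: this is property (ii) of the basic procedure, propagated from step $n$ to step $n+1$ by the iterated form of Lemma \ref{lem:fractalgamma}, which puts a lowest vertex of $\sigma^{n+2}(a)$ inside the block $\sigma^{n+1}(b)$ attached to the best occurrence $b=c_{n+1}^{a,n+1}$ of $b$ in $\sigma(a)$, and then at the vertex already distinguished for $b$ at step $n$. Reversing the step-$n$ best strategy (reindex $i\mapsto n+1-i$) produces an admissible sequence of length $n+1$ which, by the previous paragraph, points exactly at $c_0^{a,n}$ and carries $v(a;\cdot)$-value $v_n(a)$. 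Therefore $v_n(a)=\theta_2^{-(n+1)}\mu_{n+1}(a)$ equals the minimum of $v(a;(p_i,c_i,s_i)_{i=1}^{n+1})$ over all admissible sequences, which is the first assertion. (As a by-product $v_{n+1}(a)\leq v_n(a)$: if $k^{\ast}$ attains $\mu_m(a)$ and $\sigma^{m}(a)=u\,c_m\,u'$ with $|u|=k^{\ast}$, then $|\sigma(u)|$ is a vertex of $\sigma^{m+1}(a)$ of height $\theta_2\gamma_{k^{\ast}}(\sigma^{m}(a))=\theta_2\mu_m(a)$, so $\mu_{m+1}(a)\leq\theta_2\mu_m(a)$.)

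For $v(a)=\lim_n v_n(a)$, note first that $v(a;\cdot)$ is continuous on the compact set $\{\bx\in\SFTD:c_0^{\bx}=a\}$ — its partial sums converge uniformly since $\theta_2>1$ — so $v(a)$ is genuinely attained; then I would squeeze. For the lower bound, extend the reversed step-$n$ best strategy to a sequence $\bx^{(n)}\in\SFTD$ with $c_0^{\bx^{(n)}}=a$ by appending triples with empty prefix (take each new distinguished letter to be, say, the first letter of the image of the previous one); then $v(a;\bx^{(n)})=v_n(a)$, so $v(a)\leq v_n(a)$ for every $n$. For the upper bound, pick $\bx\in\SFTD$ with $c_0^{\bx}=a$ and $v(a;\bx)=v(a)$; its truncation $(p_i^{\bx},c_i^{\bx},s_i^{\bx})_{i=1}^{n+1}$ is admissible, so by the first assertion
$$v_n(a)\ \leq\ v\bigl(a;(p_i^{\bx},c_i^{\bx},s_i^{\bx})_{i=1}^{n+1}\bigr)\ =\ v(a)-\sum_{i>n+1}\theta_2^{-i}\gamma(p_i^{\bx}).$$
Each $p_i^{\bx}$ is a prefix of some $\sigma(c)$, so $|\gamma(p_i^{\bx})|\leq C:=(\max_{c\in A}|\sigma(c)|)(\max_{c\in A}|\gamma(c)|)$, and as $\theta_2>1$ the tail is at most $C\theta_2^{-(n+1)}/(\theta_2-1)\to0$. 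Thus $v(a)\leq v_n(a)\leq v(a)+C\theta_2^{-(n+1)}/(\theta_2-1)$, which yields the claim.

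The real obstacle is the clause ``by construction, $\ldots$ a lowest vertex'': that the level-by-level greedy choice of best occurrences composes into a \emph{globally} lowest vertex of $\sigma^{n+1}(a)$. This is the self-similar (``fractal'') structure of the broken lines $\ell(x)$ alluded to in the introduction, and it is exactly what the best strategy algorithm together with Lemma \ref{lem:fractalgamma} (its one-step incarnation) establishes; once it is in hand, everything else is the routine geometric estimate afforded by $\theta_2>1$.
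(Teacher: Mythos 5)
Your proof is correct and follows essentially the paper's own (very terse) argument: the first claim is the statement that the algorithm's output is the best strategy, i.e. that the distinguished coordinate of $w_n(a)$ is a global minimum of $\Gamma(\sigma^{n+1}(a))$, read through the bijection between admissible finite sequences and vertices of that broken line; and the convergence is the two-sided estimate $v(a)\leq v_n(a)\leq v(a)+K\theta_2^{-n}$, which is exactly the bound $|v_n(a)-v(a)|\leq K\theta_2^{-n}$ invoked in the paper. Your write-up merely makes explicit the details (reversal of indices, extension by empty prefixes, truncation of a minimizer, compactness for attainment of $v(a)$) that the paper leaves to the reader.
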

\begin{proof}
The first fact is analogous to say that $(p^{a,n}_i,c^{a,n}_i,s^{a,n}_i)_{i=0}^n$ is the best strategy.
Moreover,   
$|v_n(a) - v(a)| \leq K \theta_2^{-n}$ for some constant $K>0$. This implies the desired result.  
\end{proof}

\begin{lemma}
\label{lem:series}
Let $a \in A$. Assume there is a finite sequence  $(\bar p_j, \bar c_j, \bar s_j)_{j=1}^l$  such that 
for infinitely many $n \in \NN$,  
$(p^{a,n}_{n-j+1},c^{a,n}_{n-j+1},s^{a,n}_{n-j+1})_{j=1}^l=(\bar p_j, \bar c_j, \bar s_j)_{j=1}^l$.
Then, there exists $\by=(p^\by_i,c^\by_i,s^\by_i)_{i\in \NN} \in \SFTD$ 
such that $(\by_j)_{j=1}^l=(\bar p_j, \bar c_j, \bar s_j)_{j=1}^l$, $c^\by_0=a$ and 
$v(a)=v(c_0^\by;\by)$.
\end{lemma}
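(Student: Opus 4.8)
The statement asks: if a fixed finite ``tail pattern'' $(\bar p_j,\bar c_j,\bar s_j)_{j=1}^l$ (indexed from the central block outwards, i.e.\ as it appears at the very end of the best strategy) occurs for infinitely many steps $n$, then there is an infinite sequence $\by\in\SFTU$ realizing exactly this finite pattern at its initial segment, with $c_0^\by=a$ and $v(a)=v(c_0^\by;\by)$. The natural approach is a diagonal/compactness argument over the infinitely many witnessing values of $n$, using Lemma~\ref{lem:v(a)exists} to control the value of the series. First I would fix the infinite set $N=\{n : (p^{a,n}_{n-j+1},c^{a,n}_{n-j+1},s^{a,n}_{n-j+1})_{j=1}^l=(\bar p_j,\bar c_j,\bar s_j)_{j=1}^l\}$. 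For each $n\in N$, the best strategy $(p^{a,n}_i,c^{a,n}_i,s^{a,n}_i)_{i=0}^n$ reversed is a finite word in the alphabet $(A^*\times A\times A^*)$ of triples; since for any primitive substitution there are only finitely many triples $(p,c,s)$ with $\sigma(c)=pcs$ (the set of such triples is finite because $\sigma$ has finite range), the space $\SFTU$ is a subshift of a compact space, and I can extract from $(n_k)_{k}\subseteq N$ a subsequence along which the reversed best strategies converge coordinatewise to some $\by=(p^\by_i,c^\by_i,s^\by_i)_{i\in\NN}$. By construction $\by\in\SFTU$ (the relation $\sigma(c_i^\by)=p^\by_{i-1}c^\by_{i-1}s^\by_{i-1}$ passes to the limit since it involves only finitely many coordinates), and $c_0^\by$ equals the common value $a$; moreover for $j=1,\dots,l$ the coordinate $\by_j$ equals $(\bar p_j,\bar c_j,\bar s_j)$ because every $n_k\in N$ already has this tail.

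The remaining point is the identity $v(a)=v(c_0^\by;\by)$, where I should note the statement's notation $v(c_0^\by;\by)$ just means $v(a;\by)=\sum_{i\ge 1}\theta_2^{-i}\gamma(p^\by_i)$, viewing $\by$ as an element of $\SFTD$ by reversal (this matches the remark that finite segments of $\SFTU$ and $\SFTD$ sequences coincide once reversed). To get this I combine two facts. From Lemma~\ref{lem:v(a)exists}, $v_n(a)=v(a;(p^{a,n}_{n-i},c^{a,n}_{n-i},s^{a,n}_{n-i})_{i=0}^n)$ is minimal among the partial sums of that length, and $v(a)=\lim_{n\to\infty}v_n(a)$, with $|v_n(a)-v(a)|\le K\theta_2^{-n}$. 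On the other hand, along the convergent subsequence $(n_k)$, the first $m$ reversed coordinates of the best strategy eventually agree with $\by$, so for fixed $m$ the truncated sum $\sum_{i=1}^{m}\theta_2^{-i}\gamma(\bar{\by}_i{}^{p})$ agrees with the corresponding truncation of $v_{n_k}(a)$ for all large $k$; since $\theta_2>1$, the tails $\sum_{i>m}\theta_2^{-i}\gamma(p^{\by}_i)$ and the tails of $v_{n_k}(a)$ are each bounded by $C\theta_2^{-m}$ (the $\gamma(p_i)$ being bounded). Letting $k\to\infty$ first and then $m\to\infty$ gives $v(a;\by)=\lim_k v_{n_k}(a)=v(a)$.

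\textbf{Expected main obstacle.} The compactness/extraction is routine; the delicate bookkeeping is making sure the two indexing conventions line up — the best strategy is naturally indexed ``inward from $a$'' while $\SFTU$-sequences are indexed ``outward from the center'', so the pattern $(\bar p_j,\bar c_j,\bar s_j)_{j=1}^l$ sits at the \emph{outer} end of the best strategy but at the \emph{initial} segment of $\by$ after reversal, and one must check that the convergence is happening from the correct end so that the prescribed finite pattern is genuinely inherited by the limit rather than being pushed off to infinity. Concretely: for each fixed coordinate index $i$, the triple $(p^{a,n}_{n-i},c^{a,n}_{n-i},s^{a,n}_{n-i})$ must stabilize along the subsequence; this requires invoking the nestedness property of the best-strategy construction (the ``Step $n+1$'' construction only appends a new outermost triple and keeps the previous ones, so $(p^{a,n}_{n-i},\dots)$ for fixed $i$ is eventually constant in $n$ along any chain $C^{a_n,n}$) — but here we are told only that a \emph{finite} tail pattern recurs, not that we follow one chain, so the extraction of a convergent subsequence really is needed and cannot be bypassed by the nestedness alone.
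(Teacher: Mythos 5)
Your proposal is correct and follows essentially the same route as the paper's proof: reverse the best strategies, extract a convergent subsequence by compactness, and identify $v(a;\by)$ with $\lim_{n} v_n(a)=v(a)$ via Lemma \ref{lem:v(a)exists} together with the uniform geometric bound on the tails. The only blemish is notational: the limit of the reversed strategies satisfies $\sigma(c_i^\by)=p_{i+1}^\by c_{i+1}^\by s_{i+1}^\by$ and hence lies directly in $\SFTD$ (no further ``reversal'' is needed, and the paper simply prepends a triple $(p,a,s)$ with $\sigma(b)=pas$ to define the coordinate $0$ with $c_0^\by=a$), not in $\SFTU$ as you wrote.
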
 
\begin{proof}
For any  $n \in \NN$ where the property of the lemma holds consider the point 
$$\by^{(n)}=\by^{(n)}_0\ldots \by^{(n)}_{n}=(p,a,s)(p^{a,n}_{n},c^{a,n}_{n},s^{a,n}_{n})\ldots(p^{a,n}_0,c^{a,n}_0,s^{a,n}_0)$$
where $\sigma(b)=pas$ for some $b \in A$.

Let $\by=(p^\by_i,c^\by_i,s^\by_i)_{i\in \NN}$ be the limit of a subsequence  $(\by^{(n_i)})_{i\in \NN}$. 
It follows by construction that $(\by_j)_{j=1}^l=(\bar p_j, \bar c_j, \bar s_j)_{j=1}^l$, $c_0^\by=a$ and
 $\sigma(c^{\by}_i)=p^{\by}_{i+1}c^{\by}_{i+1}s^{\by}_{i+1}$ for any $i \geq 0$.
 Also, $c^\by_{i+1}$ is the best occurrence of this symbol in 
$\sigma( c^\by_{i})$.

Let $\epsilon>0$ and $i_0\in \NN$ such that $|v(a)-v_{n_i}(a)|\leq \epsilon/2$  for $i\geq i_0$. 
Let $L \in \NN$ be such that $\theta_2^{-L}\leq \epsilon/4C$ where $C>0$ is such that 
$|\gamma(p^{c,n}_i)|/(\theta_2-1)\leq C$ for any  $c\in A$ and $n\in \NN$. Thus for $i$ enough large, 
$(p^\by_j,c^\by_j,s^\by_j)=(p_{n_i-j+1},c_{n_i-j+1},s_{n_i-j+1})$ for $0\leq j <L$ and 
$|v(a)-\sum_{i\geq 1} \theta_2^{-i}  \gamma(p^\by_i)| \leq \epsilon$. Since $\epsilon$ is arbitrary one concludes $v(c_0^\by)=v(a)=\sum_{i\geq 1} \theta_2^{-i} \gamma(p^\by_i)$.
\end{proof}

One says that a point $\by=(p_i^\by,c_i^\by,s_i^\by)_{i\in \NN} \in \SFTD$ verifies the {\it continuation property} if 
$v(c_i^\by)=v(c^\by_i;T^i(\by))$ for all $i\geq 0$, where $T$ is the shift map. 
It is clear that $T^i(\by)$ has the continuation property too, for any $i\in \NN$.  In fact to satisfy the continuation property it is enough to be minimal for $c_0^\by$. 

\begin{lemma}\label{contproperty}
If $\by=(p_i^\by,c_i^\by,s_i^\by)_{i\in \NN} \in \SFTD$ is minimal for $c_0^\by$ (that is, $v(c_0^\by)=v(c^\by_0;\by)$) then $\by$ verifies the continuation property.
\end{lemma}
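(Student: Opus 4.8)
The plan is to prove this by a cut‑and‑paste exchange argument, which is essentially the Bellman optimality principle for the series $v(\cdot\,;\cdot)$. Assume for contradiction that $\by$ is minimal for $c_0^\by$ but the continuation property fails, say $v(c_i^\by;T^i(\by))\neq v(c_i^\by)$ for some $i\geq 1$ (the case $i=0$ is exactly the hypothesis). First I would record that $T^i(\by)$ again belongs to $\SFTD$, since the defining relations $\sigma(c_j)=p_{j+1}c_{j+1}s_{j+1}$ are preserved under the shift, and $c_0^{T^i(\by)}=c_i^\by$; hence by the defining (infimum) property of $v(c_i^\by)$ one has $v(c_i^\by;T^i(\by))\geq v(c_i^\by)$. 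So the only way the property can fail is the strict inequality $v(c_i^\by;T^i(\by))> v(c_i^\by)$, and I may pick $\bx'=(p_k^{\bx'},c_k^{\bx'},s_k^{\bx'})_{k\in\NN}\in\SFTD$ with $c_0^{\bx'}=c_i^\by$ and $v(c_i^\by;\bx')<v(c_i^\by;T^i(\by))$.

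Next I would form the spliced sequence $\by'$ defined by $\by'_j=\by_j$ for $0\leq j\leq i$ and $\by'_j=\bx'_{j-i}$ for $j>i$, and check $\by'\in\SFTD$: for $j<i$ the required relation is inherited from $\by$, for $j>i$ from $\bx'$, and at the gluing index $j=i$ it holds precisely because $c_0^{\bx'}=c_i^\by=c_i^{\by'}$, so $\sigma(c_i^{\by'})=\sigma(c_0^{\bx'})=p_1^{\bx'}c_1^{\bx'}s_1^{\bx'}=p_{i+1}^{\by'}c_{i+1}^{\by'}s_{i+1}^{\by'}$. Since only prefixes of index $\geq 1$ enter the series, splitting $v(c_0^{\by'};\by')=\sum_{j\geq 1}\theta_2^{-j}\gamma(p_j^{\by'})$ at $j=i$ and re‑indexing the tail by $k=j-i$, together with $v(c_i^\by;\bx')=\sum_{k\geq1}\theta_2^{-k}\gamma(p_k^{\bx'})$ and $v(c_i^\by;T^i(\by))=\sum_{k\geq1}\theta_2^{-k}\gamma(p_{i+k}^\by)$, gives
\[
v(c_0^\by;\by') - v(c_0^\by;\by) \;=\; \theta_2^{-i}\bigl(v(c_i^\by;\bx') - v(c_i^\by;T^i(\by))\bigr).
\]
As $\theta_2>1$, the factor $\theta_2^{-i}$ is positive, so the right-hand side is strictly negative; hence $v(c_0^\by;\by')<v(c_0^\by;\by)=v(c_0^\by)$, while $\by'\in\SFTD$ with $c_0^{\by'}=c_0^\by$ — contradicting the defining property of $v(c_0^\by)$. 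Therefore $v(c_i^\by;T^i(\by))=v(c_i^\by)$ for every $i\geq 0$, which is the continuation property.

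I do not expect a genuine obstacle; the only points that need care are bookkeeping ones: checking the splice respects the $\SFTD$ relation at the single gluing index (automatic from $c_0^{\bx'}=c_i^\by$) and the re‑indexing of the tail of the series that produces the multiplicative constant $\theta_2^{-i}$. All series involved converge absolutely and uniformly because of the uniform bound on $|\gamma(p_k)|$ already used around Lemma~\ref{lem:series}, so splitting and re‑indexing them is legitimate. Note also that the argument never uses that $v(c_i^\by)$ is attained, only that it is an infimum, so it is robust to that point.
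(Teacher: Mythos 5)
Your proof is correct and rests on the same cut-and-paste optimality argument as the paper: the paper splices the minimizing sequence $\bz$ for $b=c_1^\by$ to form $\bw=\by_0\by_1 T(\bz)$ and contradicts $v(c_0^\by)=v(c_0^\by;\by)$ when $v(b;T(\by))>v(b)$, which is your exchange at index $i=1$ (the general $i$ following by iterating). The only cosmetic differences are that you splice directly at an arbitrary index $i$ with a strictly better (not necessarily minimizing) continuation $\bx'$, so you never need the minimum $v(c_i^\by)$ to be attained; both versions are fine.
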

\begin{proof}
Let $b=c_1^\by$ and $\bz=(p^\bz_i,c^\bz_i,s^\bz_i)_{i\in \NN} \in  \SFTD$ with $c_0^\bz = b$ and 
$v(b;\bz)=v(b)$ given by Lemma \ref{lem:series} (considering $l=0$). The sequence 
$\bw=\by_0 \by_1 T(\bz)$ belongs to $\SFTD$ and verifies $v(a;\bw)=\theta_2^{-1} \gamma(p_1^\by)+
\theta_2^{-1} v(b)$. Thus, if $v(b;T(\by)) > v(b)$, from 
$v(a)=v(a;\by)=\theta_2^{-1} \gamma(p_1^\by)+ \theta_2^{-1} v(b;T(\by))$, one deduces that 
$v(a;\bw)< v(a)$ which is a contradiction. 
\end{proof}

This lemma proves that sequences $\by$ constructed in Lemma \ref{lem:series} verifies  the continuation property.

\subsection{Minimal points are ultimately periodic}
In this section we prove that any minimal point $x \in \xs$ has ultimately  periodic prefix-suffix decomposition. That is, if  $\bar x=(p_i,c_i,s_i)_{ i \in \NN }$ is the prefix-suffix decomposition of $x$, 
then  $T^{p+q}({\bar x})=T^q{\bar x}$ for some $p>q \geq 0$. If $q=0$ one says $x$ is a periodic minimal point. 

\begin{lemma}
\label{lem:periodicmin}
For every $a \in A$ there exists a  ultimately periodic point \break
${\bx(a)}=(p^{\bx(a)}_i,c^{\bx(a)}_i,s^{\bx(a)}_i)_{i\in \NN}  \in \SFTD$ with $c^{\bx(a)}_0 = a$ and $v(a;{\bx(a)})=v(a)$ (so, $\bx(a)$ has the continuation property).

 \end{lemma}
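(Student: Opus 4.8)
The plan is to show that the property ``$v(a;\by)=v(a)$'' is actually governed by a finite automaton on the alphabet $A$, so that ultimately periodic minimal sequences arise from cycles of a finite graph. The crucial first step is a Bellman-type recursion for $v(a)$. Given $\by=(p_i,c_i,s_i)_{i\in\NN}\in\SFTD$ with $c_0^\by=a$, one has directly from the definition $v(a;\by)=\theta_2^{-1}\gamma(p_1)+\theta_2^{-1}v(c_1^\by;T\by)$, and $T\by$ again lies in $\SFTD$, now with first letter $c_1^\by$; conversely, for any factorization $\sigma(a)=p\,b\,s$ and any $\bz\in\SFTD$ with first letter $b$, inserting $(p,b,s)$ in front of $T\bz$ (preceded by an arbitrary triple $(\varepsilon,a,\varepsilon)$) produces an element of $\SFTD$ with first letter $a$ realizing the value $\theta_2^{-1}\gamma(p)+\theta_2^{-1}v(b;\bz)$. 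Minimizing first over $\bz$, then over the position of $b$, and observing that for a fixed target letter $b$ the quantity $\gamma(p)$ is smallest exactly at the best occurrence of $b$ in $\sigma(a)$ (indeed $\gamma_{|p|+1}(\sigma(a))=\gamma(p)+\gamma(b)$, which is precisely the quantity defining the best occurrence), I obtain
$$v(a)=\min\bigl\{\theta_2^{-1}\gamma(\hat p_{a,b})+\theta_2^{-1}v(b)\ :\ b\text{ occurs in }\sigma(a)\bigr\},$$
where $\hat p_{a,b}$ denotes the prefix of $\sigma(a)$ to the left of the best occurrence of $b$.

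Next I would build the finite directed graph $G$ with vertex set $A$ and an edge $a\to b$ whenever $b$ occurs in $\sigma(a)$ and $\theta_2^{-1}\gamma(\hat p_{a,b})+\theta_2^{-1}v(b)=v(a)$. By the recursion, and because $\sigma(a)\neq\varepsilon$, every vertex of $G$ has out-degree at least one. The key claim is that any ultimately periodic infinite path $a=c_0\to c_1\to c_2\to\cdots$ in $G$ yields the desired $\bx(a)$: define $(p_i,c_i,s_i)$ for $i\geq1$ by factoring $\sigma(c_{i-1})=p_i c_i s_i$ with $c_i$ at its best occurrence, and set $(p_0,c_0,s_0)=(\varepsilon,a,\varepsilon)$. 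Then $\bx(a):=(p_i,c_i,s_i)_{i\in\NN}$ lies in $\SFTD$ (the relations $\sigma(c_i)=p_{i+1}c_{i+1}s_{i+1}$ hold by construction), has $c_0^{\bx(a)}=a$, and is ultimately periodic since the sequence $(c_i)_i$ is and all the triples are determined by it. Unrolling the defining identity of the edges along the path gives $v(a)=\sum_{i=1}^{k}\theta_2^{-i}\gamma(\hat p_{c_{i-1},c_i})+\theta_2^{-k}v(c_k)$ for every $k$; since the finitely many values $v(c)$, $c\in A$, are bounded, letting $k\to\infty$ yields $v(a)=\sum_{i\geq1}\theta_2^{-i}\gamma(p_i)=v(a;\bx(a))$. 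By Lemma~\ref{contproperty} the point $\bx(a)$ then automatically satisfies the continuation property.

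It remains to produce such a path, which is elementary: starting at $a$ and following edges of $G$ (always possible since all out-degrees are positive), some vertex must recur within the first $|A|+1$ steps, giving a cycle reachable from $a$; concatenating the initial segment from $a$ to that cycle with infinitely many laps around the cycle gives an ultimately periodic infinite path in $G$ starting at $a$. I expect the main obstacle to be the recursion itself: one has to verify carefully that ``taking the tail'' sets up a bijection between the sequences of $\SFTD$ issued from $a$ with a prescribed first factorization and the sequences of $\SFTD$ issued from the corresponding letter, so that the minima really telescope as claimed; once this finite-state reformulation is in place, the rest is routine. (Alternatively one could try to argue through Lemma~\ref{lem:series} — by pigeonhole among the reversed best strategies some finite pattern recurs infinitely often and can be embedded in a minimal $\by$ — but squeezing genuine ultimate periodicity out of that amounts, in effect, to the recursion above.)
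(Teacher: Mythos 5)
Your proof is correct, but it follows a genuinely different route from the paper's. The paper starts from a minimizing sequence $\by\in\SFTD$ supplied by Lemma~\ref{lem:series} (with $l=0$), applies the pigeonhole principle to find indices $0<q<p$ with $\by_q=\by_p$ and $c_{q-1}^\by=c_{p-1}^\by$, replaces the tail of $\by$ by the periodic repetition of the block $\by_q\ldots\by_{p-1}$, and checks by a geometric-series computation (resting on the continuation property, i.e.\ that tails of a minimizer are minimizers for their own first letter) that the value $v(a)$ is unchanged. You instead reformulate the optimization as a Bellman-type recursion $v(a)=\min_b\{\theta_2^{-1}\gamma(\hat p_{a,b})+\theta_2^{-1}v(b)\}$ — whose verification is sound: the tail/insertion correspondence you describe does telescope the infima, boundedness of $\gamma$ and $\theta_2>1$ make all series finite, and uniqueness of best occurrences (Lemma~\ref{lem:alg}) pins down the triples — and then extract an ultimately periodic path in the finite graph of optimal transitions and unroll the recursion, using $\theta_2^{-k}v(c_k)\to 0$. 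Your approach buys a cleaner finite-state picture and is essentially independent of the compactness argument of Lemma~\ref{lem:series} (it even re-proves that the infimum defining $v(a)$ is attained, though if one insists on the paper's definition of $v(a)$ as a minimum one should either cite Lemma~\ref{lem:series} for attainment or phrase the recursion with infima, as an $\epsilon$-argument allows); the paper's proof buys brevity by reusing machinery already in place (Lemmas~\ref{lem:series} and~\ref{contproperty}) and shows slightly more, namely that the periodization can be grafted onto any prescribed minimizer. Your final appeal to Lemma~\ref{contproperty} for the continuation property is exactly what the paper intends by the parenthetical in the statement.
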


\begin{proof}
Let $a \in A$ and $\by=(p^\by_i,c^\by_i,s^\by_i)_{i\in \NN} \in  \SFTD$ with $c_0^\by = a$ and 
$v(a;\by)=v(a)$ given by Lemma \ref{lem:series} (considering $l=0$). 
We are going to construct another one with ultimately periodic decomposition.  

Let $0 <  q < p$ be such that $\by_q=\by_p$ and $c_{q-1}^\by=c_{p-1}^\by=b$.  
The preperiodic sequence 
$\bx=\by_0 \ldots \by_{q-1} \by_{q}  \ldots \by_{p-1} \by_q \ldots \by_{p-1} \ldots \in \SFTD$  since 
$\sigma( c^\by_{p-1})=  p^\by_{q}  c^\by_{q} s^\by_{q}$ by hypothesis.  We are going to prove that 
$v(a;\bx)=v(a)$.  

Observe that, by Lemma  \ref{lem:series}, $$v(b)=\sum_{i\geq q} \theta_2^{-(i-q+1)} \gamma( p^\by_i) 
\text{ and } 
v(b)=\sum_{i\geq p} \theta_2^{-(i-p+1)} \gamma( p^\by_i) \ . $$ Thus, 
$v(b)= \sum_{i=q}^{p-1} \theta_2^{-(i-q+1)} \gamma( p^\by_i) + \sum_{i\geq p} \theta_2^{-(i-q+1)}  \gamma(p^\by_i)= \sum_{i=q}^{p-1} \theta_2^{-(i-q+1)} \gamma( p^\by_i) + \theta_2^{-(p-q)} v(b)$. If we denote 
$B= \sum_{i=q}^{p-1} \theta_2^{-(i-q+1)} \gamma( p^\by_i)$, then 
$v(b)= B  \sum_{i\geq 0} \theta_2^{-(p-q)i }$.  Consequently, 
$$v(a)= \sum_{i=1}^{q-1} \theta_2^{-i} \gamma( p^\by_i) + \theta_2^{-(q-1)} B \sum_{i\geq 0} \theta_2^{-(p-q)i }$$ 

On the other hand, a direct computation yields to 
$$v(a;\bx)=  \sum_{i=1}^{q-1} \theta_2^{-i} \gamma( p^\by_i) + \theta_2^{-(q-1)} ( \sum_{i\geq 0} \theta_2^{-(p-q)i } B ) \  ,$$

which implies, $v(a;\bx)=v(a)$. 
\end{proof}

To each preperiodic sequence $\bx(a)$ constructed in previous lemma one can associate a point
$x$ in the symbolic space $\xs$ with periodic prefix-suffix decomposition of period
$$(p_{0},c_{0},s_{0}),\ldots,  
(p^{}_{p-q},c^{}_{p-q},s^{}_{p-q})=
(p^{ \bx(a)}_{p-1},c^{ \bx(a)}_{p-1},s^{ \bx(a)}_{p-1}),\ldots,  
(p^{ \bx(a)}_{q},c^{ \bx(a)}_{q},s^{ \bx(a)}_{q}) \ . $$ 
Even if, by construction, this point is associated to the minimal value $v(b)$, 
there is no reason for it to be a minimal point. 

Without loss of generality we will do the following simplification. 
By iterating $\sigma$ enough times one can assume that all ultimately periodic sequences constructed in Lemma \ref{lem:periodicmin} are of period $1$ and of preperiod $1$. That is, 
for  each letter $a \in A$, $c_0^{\bx(a)}=a$ and $\bx_i=(p^{(a)},\hat a, s^{(a)})$ for all $i\geq 1$. 
The letter $a \in A$ is \emph{periodic} if $\hat a = a$ and one denotes  $\hat A$ the subset of periodic letters. Since,  the construction of Lemma \ref{lem:periodicmin} implies 
that $v(c_i^{\bx(a)})=v(a;T^i(\bx(a)))$ for $0\leq i \leq p-1$, then under this simplification $v(\hat a)=v(\hat a;T(\bx(a)))$.

\begin{lemma}\label{lem:replace}
Let $\by \in \SFTD$ verifying the continuation property. Then,  
for any $i\geq 1$ the point $\by^{(i)}=\by_0\ldots\by_{i} T(\bx(c_i^\by))$ has the continuation property 
too.
\end{lemma}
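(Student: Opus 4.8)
The plan is to avoid verifying the continuation property at every index by hand and instead reduce to Lemma \ref{contproperty}: that lemma says any element of $\SFTD$ which is minimal for its zeroth symbol automatically verifies the continuation property, so it suffices to prove that $\by^{(i)}$ is minimal for its zeroth symbol. Since $\by^{(i)}=\by_0\ldots\by_i\,T(\bx(c_i^\by))$ has zeroth triple $\by_0$, we have $c_0^{\by^{(i)}}=c_0^\by=:a$, and the goal becomes $v(a;\by^{(i)})=v(a)$.

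First I would check that $\by^{(i)}\in\SFTD$. The relation $\sigma(c_j)=p_{j+1}c_{j+1}s_{j+1}$ holds for $0\le j<i$ because it holds in $\by$; it holds at $j=i$ because $\bx(c_i^\by)\in\SFTD$ has $c_0^{\bx(c_i^\by)}=c_i^\by$, so $\sigma(c_i^\by)=p_1^{\bx(c_i^\by)}c_1^{\bx(c_i^\by)}s_1^{\bx(c_i^\by)}$, and the triple $(p_1^{\bx(c_i^\by)},c_1^{\bx(c_i^\by)},s_1^{\bx(c_i^\by)})$ is precisely the triple of index $i+1$ in $\by^{(i)}$; and it holds for $j>i$ because $\bx(c_i^\by)\in\SFTD$ and $\SFTD$ is shift-invariant.

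The only computation is a reindexing of the series defining $v$. In $\by^{(i)}$ the prefix of index $k$ is $p_k^\by$ for $1\le k\le i$ and is $p_m^{\bx(c_i^\by)}$ for $k=i+m$ with $m\ge1$; hence
\[
v(a;\by^{(i)})=\sum_{k=1}^{i}\theta_2^{-k}\gamma(p_k^\by)+\theta_2^{-i}\,v\bigl(c_i^\by;\bx(c_i^\by)\bigr),
\]
and by Lemma \ref{lem:periodicmin} the last term equals $\theta_2^{-i}v(c_i^\by)$. Applying the very same splitting to $\by=\by_0\ldots\by_i\,T^i(\by)$ gives
\[
v(a;\by)=\sum_{k=1}^{i}\theta_2^{-k}\gamma(p_k^\by)+\theta_2^{-i}\,v\bigl(c_i^\by;T^i(\by)\bigr),
\]
and here $v(c_i^\by;T^i(\by))=v(c_i^\by)$ is precisely the continuation property of $\by$ at index $i$. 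Comparing the two displays yields $v(a;\by^{(i)})=v(a;\by)$, which equals $v(a)$ because $\by$ has the continuation property at index $0$. Thus $\by^{(i)}$ is minimal for $c_0^{\by^{(i)}}$, and Lemma \ref{contproperty} gives the continuation property for $\by^{(i)}$.

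The only point demanding care is the index bookkeeping in the reindexing and the check that the concatenation is a legitimate element of $\SFTD$; I expect no substantial obstacle. The conceptual gain — and the reason the argument is short — is noticing that one can bypass checking the continuation property at every index $j$ (which would force tracking the prefixes of $\by^{(i)}$ separately for $0\le j\le i$) by appealing to Lemma \ref{contproperty} to pass from minimality at the single index $0$ to the full continuation property.
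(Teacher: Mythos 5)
Your proof is correct, but it takes a route that differs from the paper's in a way worth noting. The paper verifies the continuation property of $\by^{(i)}$ index by index: for each $1\leq j\leq i$ it writes $v(c_j^\by)=\sum_{k=1}^{i-j}\theta_2^{-k}\gamma(p_{k+j}^\by)+\theta_2^{-(i-j)}v(c_i^\by)$ and then uses $v(c_i^\by)=v(c_i^\by;\bx(c_i^\by))$ together with $v(\hat c_i^\by)=v(\hat c_i^\by;T(\bx(c_i^\by)))$ to conclude directly that every shifted tail of $\by^{(i)}$ realizes the corresponding minimal value. You instead perform the analogous splitting only at index $0$, obtaining $v(c_0^\by;\by^{(i)})=v(c_0^\by;\by)=v(c_0^\by)$ from Lemma \ref{lem:periodicmin} and the continuation property of $\by$ at indices $0$ and $i$, and then invoke Lemma \ref{contproperty} to upgrade minimality at the single index $0$ to the full continuation property. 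Both arguments rest on the same reindexing identity and on $v(c_i^\by;\bx(c_i^\by))=v(c_i^\by)$; what your version buys is brevity and a cleaner logical structure (no bookkeeping over intermediate indices $j$), at the cost of making Lemma \ref{contproperty} load-bearing --- which is legitimate here, since that lemma is proved earlier from Lemma \ref{lem:series} and involves no circularity. Your membership check that $\by_0\ldots\by_i\,T(\bx(c_i^\by))\in\SFTD$, hinging on $\sigma(c_i^\by)=p_1^{\bx(c_i^\by)}c_1^{\bx(c_i^\by)}s_1^{\bx(c_i^\by)}$, is also worth having made explicit: it confirms that the indexing in the statement ($\by_0\ldots\by_i$) is the right one, whereas the last line of the paper's proof writes $\by_0\ldots\by_{i-1}$, an apparent off-by-one slip that your bookkeeping avoids.
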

\begin{proof}
Let $i \geq 1$ and $1\leq j \leq i$. From the continuation property one deduces that 
$v(c^\by_j)=\sum_{k=1}^{i-j} \theta_2^{-k} \gamma(p^\by_{k+j})+\theta_2^{-(i-j)} v(c_i^\by)$. 
But, $v(c_i^\by)=v(c_i^\by;\bx(c_i^\by))$ and $v(\hat c_i^\by)=v(\hat c_i^\by; T(\bx (c_i^\by)))$, then 
$\by^{(i)}=\by_0\ldots\by_{i-1} T(\bx(c_i^\by))$ has the continuation property 
too.
\end{proof}

\begin{lemma}\label{lem:cola}
Let $\bx , \by \in \SFTD$ such that $(\bx_i)_{i\geq l+1}=(\by_i)_{i\geq l+1}$ and $c_0^\bx=c_0^\by=a$. 
If $v(a;\bx)= v(a;\by)$ then $(\bx_i)_{i\geq 1}=(\by_i)_{i\geq 1}$.
\end{lemma}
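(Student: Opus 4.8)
The plan is to reduce the statement to a purely combinatorial \emph{unique parsing} fact, the bridge between the two being the strict positivity of $\lambda$ (equivalently Lemma~\ref{lem:alg}).

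First I would use the hypothesis $(\bx_i)_{i\ge l+1}=(\by_i)_{i\ge l+1}$ to cancel equal tails: since $p_i^\bx=p_i^\by$ for $i>l$, the two numbers $v(a;\bx)-\sum_{i=1}^{l}\theta_2^{-i}\gamma(p_i^\bx)=\sum_{i>l}\theta_2^{-i}\gamma(p_i^\bx)$ and $v(a;\by)-\sum_{i=1}^{l}\theta_2^{-i}\gamma(p_i^\by)=\sum_{i>l}\theta_2^{-i}\gamma(p_i^\by)$ coincide, so $v(a;\bx)=v(a;\by)$ forces $\sum_{i=1}^{l}\theta_2^{-i}\gamma(p_i^\bx)=\sum_{i=1}^{l}\theta_2^{-i}\gamma(p_i^\by)$. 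Multiplying by $\theta_2^{l}$ and using Lemma~\ref{lem:profile} this reads $\gamma(P^\bx)=\gamma(P^\by)$, where $P^\bx:=\sigma^{l-1}(p_1^\bx)\cdots\sigma(p_{l-1}^\bx)\,p_l^\bx$ is the prefix of $\sigma^{l}(a)$ lying to the left of the marked letter $c_l^\bx$ in the decomposition $\sigma^{l}(a)=\sigma^{l-1}(p_1^\bx)\cdots p_l^\bx\,c_l^\bx\,s_l^\bx\,\sigma(s_{l-1}^\bx)\cdots\sigma^{l-1}(s_1^\bx)$, and similarly for $P^\by$. Then comes the rigidity step: two \emph{distinct} prefixes of one and the same word have distinct $\gamma$-values, because if $P$ is a proper prefix of $P'$ then $\gamma(P')-\gamma(P)=\sum_{b\in A}n_b\gamma(b)$ with the $n_b$ non-negative integers, not all zero, hence nonzero by Lemma~\ref{lem:alg}. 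Since $P^\bx$ and $P^\by$ are both prefixes of $\sigma^{l}(a)$, this gives $P^\bx=P^\by$ as words; in particular the marked position coincides, so $c_l^\bx=c_l^\by$ and $s_l^\bx=s_l^\by$ already.

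The main step is to promote the word equality $P^\bx=P^\by$ to the equality of the whole nested decomposition $(\bx_k)_{k=1}^{l}=(\by_k)_{k=1}^{l}$; together with the hypothesis on the tails this yields the claim. I would prove this unique-parsing fact by induction on $l$: if $\sigma(c_{k-1})=p_kc_ks_k$ and $\sigma(c_{k-1}')=p_k'c_k's_k'$ for $1\le k\le l$, with $c_0=c_0'=a$ and $\sigma^{l-1}(p_1)\cdots p_l=\sigma^{l-1}(p_1')\cdots p_l'$, then the two decompositions coincide. Pulling the outermost $\sigma$ out front reduces this to $\sigma(Q)p_l=\sigma(Q')p_l'$, where $Q=\sigma^{l-2}(p_1)\cdots p_{l-1}$ and $Q'=\sigma^{l-2}(p_1')\cdots p_{l-1}'$ are prefixes of $\sigma^{l-1}(a)$ (each with the next letter equal to $c_{l-1}$, resp. $c_{l-1}'$), hence one extends the other. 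The crucial observation is that $p_l$ and $p_l'$ are \emph{proper} prefixes of $\sigma(c_{l-1})$ and $\sigma(c_{l-1}')$: if $Q'$ strictly extended $Q$, say $Q'=Q\,c_{l-1}\,u$, then $p_l=\sigma(c_{l-1})\sigma(u)p_l'$ would have length at least $|\sigma(c_{l-1})|>|p_l|$, a contradiction. Hence $Q=Q'$, whence $p_l=p_l'$, $c_{l-1}=c_{l-1}'$, and then $c_l=c_l'$, $s_l=s_l'$ from $\sigma(c_{l-1})=p_lc_ls_l$; applying the induction hypothesis to $Q=Q'$ closes the step (the case $l=1$ being immediate, since $p_1=p_1'$ already determines $c_1=c_1'$ and $s_1=s_1'$). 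The only genuinely delicate point in the argument is precisely this properness of the prefixes $p_k$, which is what rules out a "slip" between the two parsings; note that no recognizability or letter-injectivity of $\sigma$ is needed anywhere.
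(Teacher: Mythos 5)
Your proof is correct and follows essentially the same route as the paper: cancel the common tails of the series, multiply by $\theta_2^{l}$ and use Lemma~\ref{lem:profile} to rewrite the equality as $\gamma$ of two prefixes of $\sigma^{l}(a)$, then invoke Lemma~\ref{lem:alg} to conclude those prefixes coincide as words. The only difference is that you spell out, by induction, the final unique-parsing step that the paper compresses into ``this implies'' (your properness-of-$p_k$ observation is exactly the point needed there), which is a welcome but not essentially different elaboration.
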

\begin{proof}
Let $\bx=(p_i^\bx,c_i^\bx,s_i^\bx)_{i\in \NN}$ and $\by=(p_i^\by,c_i^\bx,s_i^\by)_{i\in \NN}$.
From the hypothesis one deduces that  
$$\sum_{i=1}^l \theta_2^{-i} \gamma(p^{\bx}_i)= \sum_{i=1}^l \theta_2^{-i} \gamma(p^{\by}_i)$$
and consequently 
$$\gamma(\sigma^{l-1} (p^{\bx}_1)\ldots p^{\bx}_l)=\gamma(\sigma^{l-1}(p^{\by}_1)\ldots p^{\by}_l) \  .$$
But words 
$\sigma^{l-1} (p^{\bx}_1)\ldots p^{\bx}_l$ and $\sigma^{l-1}(p^{\by}_1)\ldots p^{\by}_l$
are prefixes of $\sigma^{l}(a)$. Then, by the algebraic condition (Lemma \ref{lem:alg}) they must be the same. This implies $(p^{\bx}_i,c^{\bx}_i, s^{\bx}_i)=(p_i^{\by},c_i^{\by},s_i^{\by})$ for $1\leq i \leq l$.  
\end{proof}

\begin{theo}\label{theo:periodicmin}
The prefix-suffix decomposition of any minimal point is ultimately periodic.
\end{theo}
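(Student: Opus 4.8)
The plan is to combine the dynamical content of minimality --- the broken line of $x$ stays nonnegative, hence so do all its values at the block boundaries on the negative side --- with the combinatorial rigidity of Lemmas~\ref{lem:periodicmin}, \ref{lem:replace} and \ref{lem:cola}, and with the algebraic hypothesis (AH) through Lemma~\ref{lem:alg}. Fix a minimal point $x$ with prefix-suffix decomposition $\bar x=(p_i,c_i,s_i)_{i\in\NN}\in\SFTU$.

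\textbf{Step 1 (minimality makes the truncations of $\bar x$ asymptotically optimal strategies).} Reading $\gamma$ at the block boundaries of $x(-\infty,-1]=\cdots\sigma^2(p_2)\sigma(p_1)p_0$ and using $\gamma(\sigma^i(p_i))=\theta_2^i\gamma(p_i)$ (Lemma~\ref{lem:profile}) gives $\sum_{j=0}^{m}\theta_2^{-j}\gamma(p_{m-j})\ge 0$ for every $m$, and more inside the blocks. Together with the fact recalled before Proposition~\ref{prop:finitenumber} --- the level-$n$ truncation of $\bar x$ coincides with the best strategy $(p_i^{a_n,n},c_i^{a_n,n},s_i^{a_n,n})_{i=0}^{n}$ of some $a_n\in A$ at step $n$ --- this identifies, for each $n$, the reversed truncation with the finite $\SFTD$-sequence realizing $v_n(a_n)$. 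By Lemma~\ref{lem:v(a)exists}, $|v_n(a_n)-v(a_n)|\le K\theta_2^{-n}$, so these truncations are asymptotically optimal.

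\textbf{Step 2 (extracting a genuine minimizer).} As $A$ is finite, pick $a^*$ with $a_n=a^*$ along an infinite set $\mathcal N$. For $n\in\mathcal N$, complete the reversed level-$n$ truncation with the canonical tail $T(\bx(c_0))$ of Lemma~\ref{lem:periodicmin}, getting $\hat\by^{(n)}\in\SFTD$ with $c_0^{\hat\by^{(n)}}=a^*$ and $v(a^*;\hat\by^{(n)})=v_n(a^*)+O(\theta_2^{-n})$. Passing to a convergent subsequence and using that $v(a^*;\cdot)$ is Lipschitz on $\SFTD$, one obtains $\by\in\SFTD$, $c_0^{\by}=a^*$, with $v(a^*;\by)=v(a^*)$; by Lemma~\ref{contproperty}, $\by$ has the continuation property.

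\textbf{Step 3 (every such minimizer is ultimately constant --- the core).} First, a minimizer necessarily has each $c_{k+1}^{\by}$ equal to the (unique, by Lemma~\ref{lem:alg}) best occurrence of its symbol in $\sigma(c_k^{\by})$, since otherwise moving it there would strictly decrease $v$. Next, Lemma~\ref{lem:replace} gives that $\by^{(i)}=\by_0\cdots\by_i\,T(\bx(c_i^{\by}))$ again has the continuation property, and the identity $v(a^*;\by^{(i)})=\sum_{k=1}^{i}\theta_2^{-k}\gamma(p_k^{\by})+\theta_2^{-i}v(c_i^{\by})=v(a^*;\by)=v(a^*)$ shows it is again a minimizer. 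After the simplification, the tail of $\by^{(i)}$ from index $i+1$ is the constant triple $(p^{(c_i^{\by})},\hat c_i^{\by},s^{(c_i^{\by})})$ with $\hat c_i^{\by}\in\hat A$. Hence, whenever $c_i^{\by}=c_j^{\by}$ with $1\le i<j$, the sequences $\by^{(i)},\by^{(j)}$ have the same constant tail, so they agree past index $j$; sharing the leading symbol $a^*$ and the value $v(a^*)$, Lemma~\ref{lem:cola} forces them to agree from index $1$, which pins $\by$ to that constant triple on $[i+1,j]$ and, comparing central letters, forces $c_i^{\by}=\hat c_i^{\by}\in\hat A$. Applying this to the infinitely many repetitions of any value of $(c_i^{\by})_{i\ge1}$ shows this sequence is eventually a single periodic letter $e\in\hat A$; then $c_{k+1}^{\by}$ is eventually the best occurrence of $e$ in $\sigma(e)$, a fixed position, so $\by$ is ultimately constant, in particular ultimately periodic.

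\textbf{Step 4 (transfer to $\bar x$).} For $n$ large the index-$n$ triple $(p_n,c_n,s_n)$ of $\bar x$ is a fixed function of $a_n$: it is $(p_{a_n}(b),b,s_{a_n}(b))$, where $b$ is the letter realizing the minimum in the recursion $v_n(a_n)=\min_{\ell}\bigl(\theta_2^{-1}\gamma(p_{a_n}(\ell))+\theta_2^{-1}v_{n-1}(\ell)\bigr)$, $\ell$ ranging over the distinct letters of $\sigma(a_n)$ and $p_{a_n}(\ell)$ the prefix before the best occurrence of $\ell$; since $v_{n-1}(\ell)\to v(\ell)$ and, by Step~3 together with Lemma~\ref{lem:alg} (each $v(\ell)$ lying in $\QQ[\theta_2]$ because it is realized by an ultimately periodic strategy), the numbers $\gamma(p_a(\ell))+v(\ell)$ are pairwise distinct, this minimizing letter $b=b^*(a_n)$ is eventually independent of $n$. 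Comparing level-$n$ and level-$(n{+}1)$ truncations then gives $a_n=b^*(a_{n+1})$ for $n$ large, so $(a_n)_n$ is a backward orbit of the map $b^*\colon A\to A$ on the finite set $A$ and is eventually periodic; consequently $(p_i,c_i,s_i)_{i\in\NN}$ is eventually periodic, i.e.\ $\bar x$ is ultimately periodic, which is Theorem~\ref{theo:periodicmin}.

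The main obstacle is exactly the stabilization used in Step~4 (and the uniqueness of the best occurrence used in Step~3): converting the merely asymptotic optimality of the truncations of a fixed minimal point into genuine eventual periodicity of $\bar x$ requires that the scale-by-scale optimal choice stabilize, equivalently that there be no ties among the competing values $\gamma(p_a(\ell))+v(\ell)$ --- and this ``no ties'' statement is precisely what hypothesis (AH), via Lemma~\ref{lem:alg} and the fact that each optimal $v(\ell)$ lies in $\QQ[\theta_2]$, has to supply; if it failed, a minimal point with non-periodic prefix-suffix decomposition could exist.
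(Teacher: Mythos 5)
Your Steps 1--3 are essentially sound and run parallel to the paper's machinery (Lemmas \ref{lem:series}, \ref{lem:periodicmin}, \ref{contproperty}, \ref{lem:replace}, \ref{lem:cola}): extracting a minimizer $\by\in\SFTD$ from the truncations of $\bar x$ and showing that any minimizer is ultimately constant is exactly the kind of argument the paper makes. The genuine gap is in Step 4, and you have located it yourself: the ``no ties'' claim, i.e.\ that the quantities $\gamma(p_a(\ell))+v(\ell)$ are pairwise distinct as $\ell$ ranges over the letters of $\sigma(a)$, is asserted but not proved. Lemma \ref{lem:alg} cannot supply it: that lemma only says that a \emph{finite non-negative integer} combination $\sum_b n_b\gamma(b)$ vanishes only trivially (its proof applies $\eta^{-1}$ and uses positivity of $\lambda$). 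A tie is the equation $\gamma(p_a(\ell))-\gamma(p_a(\ell'))=v(\ell')-v(\ell)$, whose right-hand side is a difference of limit values, i.e.\ infinite series $\sum_{i\ge 1}\theta_2^{-i}\gamma(p_i)$; even knowing $v(\ell),v(\ell')\in\QQ[\theta_2]$ (true, since they are realized by ultimately periodic strategies), this is not a non-negative integer combination of the $\gamma(b)$'s, and applying $\eta^{-1}$ gives no sign information. The only uniqueness statement of this kind available in the paper is Lemma \ref{lem:unico}, and it is proved only for letters of $\hat A$ -- precisely by reducing to two strategies with a \emph{common tail}; for letters with $\hat\ell\neq\hat\ell'$ your tail-replacement trick gives sequences with different tails and Lemma \ref{lem:cola} does not apply. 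Without the no-ties statement, the argmin letter $b^*(a_n)$ may oscillate forever along $n$, and the whole ``backward orbit of a map on $A$'' conclusion of Step 4 collapses. (A secondary, fixable, point: you also need to identify $a_n$ with $c_{n+1}$ to pass from stabilization of the strategies to periodicity of the triples, since a priori two letters could share the same best strategy at step $n$.)

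The paper's proof is organized so that this global separation of limit values is never needed. It pigeonholes a window $(\bar p_j,\bar c_j,\bar s_j)_{j=0}^l$ with equal endpoint triples occurring infinitely often in the prefix-suffix decomposition of $x$, uses Lemma \ref{lem:series} to realize that window at the start of a minimizer with the continuation property, and then every comparison it makes is between two elements of $\SFTD$ that share a common tail (obtained by grafting the periodic strategies $\bx(\cdot)$ via Lemma \ref{lem:replace}). For such pairs the difference of the series is a finite sum, and Lemma \ref{lem:cola} converts the equality of values into an equality of $\gamma$ on two prefixes of the single word $\sigma^l(a)$, where one prefix extends the other -- so Lemma \ref{lem:alg} applies to a genuine non-negative integer combination. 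This first forces the repeated triple to be $(p^{(a)},a,s^{(a)})$ with $a\in\hat A$, and a second application of the same scheme shows that no triple $(p,b,s)$ with $b\neq a$ can precede a doubled occurrence, giving eventual periodicity of $\bar x$ directly. To repair your proof you should replace Step 4 by an argument of this type, working with repeated windows of $\bar x$ and tail-grafted competitors rather than with stabilization of the scale-by-scale optimal letter.
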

\begin{proof}
Let $x \in \xs$ be minimal point with prefix-suffixe decomposition $(p_i,c_i,s_i)_{i\in \NN}$. 
There exists a finite sequence $(\bar p_j,\bar c_j, \bar s_j)_{j=0}^l$  such that 
$(\bar p_0,\bar c_0, \bar s_0)=(\bar p_l,\bar c_l, \bar s_l)$ and for infinitely many $i \in \NN$, 
$(p_{i-j},c_{i-j},s_{i-j})_{j=0}^l=(\bar p_j,\bar c_j, \bar s_j)_{j=0}^l$. 

Let $a=\bar c_0=\bar c_l$. By Lemma  ~\ref{lem:series}, there is 
a point $\by \in \SFTD$ verifying the continuation property
such that $(\by_j)_{j=0}^l=(\bar p_j,\bar c_j, \bar s_j)_{j=0}^l$.
In particular, $v(a;\by)=v(a)$ and $v(a;T^l(\by))=v(a)$. Since, 
$v(a)=v(a;\bx(a))$, then by Lemma \ref{lem:replace} the sequence 
$\bz= \by_0\ldots \by_{l} T(\bx(a))$ has the continuation property and  
$v(a)=v(a;\bz)$ holds. Therefore, by Lemma \ref{lem:cola}, one concludes that 
$(\bx(a))_{i\geq 1}= (\bz_i)_{i\geq 1}$. 

We have proved that $a \in \hat A$, that is $a = \hat a$, and that  the word 
$(p^{(a)}, a, s^{(a)})(p^{(a)},  a, s^{(a)})$  appears infinitely many times in the prefix-suffixe decomposition of $x$. Now we prove that $(p_i,c_i,s_i)_{i\in \NN}$ is ultimately periodic with period 
$(p^{(a)}, a, s^{(a)})$. 

Assume this result does not hold. Then there is $b\not = a$ in $A$ such that 
$$(p_i, c_i, s_i) (p_{i-1},c_{i-1},s_{i-1})(p_{i-2},c_{i-2},s_{i-2})=(p^{(a)}, a, s^{(a)}) (p^{(a)}, a, s^{(a)})(p,b,s)$$ 
for infinitely many $i \in \NN$. 

By Lemma ~\ref{lem:series}, there is a point $\bw \in \SFTD$ verifying the continuation property and such that $\bw_0\bw_1\bw_2=(p^{(a)}, a, s^{(a)}) (p^{(a)}, a, s^{(a)})(p,b,s)$. Since $v(b)=v(b;T^2(\bw))$ and $v(b)=v(b;\bx(b))$, by Lemma \ref{lem:replace}, 
the points $\bu=\bw_0 \bw_1 \bw_2 T(\bx(b))$ and $\bv=\bx(a)_0\bx(b)$  have the continuation property.
Since $\bu$ and $\bv$ are ultimately equal, then, by Lemma \ref{lem:cola}, one concludes 
$a=b$ which is a contradiction. This proves the theorem.
 \end{proof}

We stress the fact  that it is possible to construct examples with minimal points having ultimately periodic but not periodic prefix-suffix decomposition.

\subsection{Convergence of series associated to minimal points}

\begin{lemma}\label{lem:unico}
Let $\by \in \SFTD$ such that $c_0^\by=a \in \hat A$ and $v(a;\by)=v(a)$. Then, 
$\by_1=(p^{(a)},a,s^{(a)})$.
\end{lemma}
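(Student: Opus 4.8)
The plan is to show that the first coordinate of any sequence $\by \in \SFTD$ with $c_0^\by = a \in \hat A$ realizing the minimum $v(a)$ must agree with the first coordinate of the canonical periodic sequence $\bx(a)$, namely $(p^{(a)}, a, s^{(a)})$. Since $a$ is a periodic letter, $\bx(a)$ has period $1$ and preperiod $1$ (by the simplification made after Lemma \ref{lem:periodicmin}), so $\bx(a)_i = (p^{(a)}, a, s^{(a)})$ for all $i \geq 1$ and $v(a; T(\bx(a))) = v(a)$, i.e.\ $v(\hat a) = v(a)$ since $\hat a = a$. First I would let $b = c_1^\by$ and invoke Lemma \ref{contproperty}: since $\by$ is minimal for $c_0^\by = a$, it verifies the continuation property, so $v(b) = v(b; T(\by))$. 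Writing $v(a) = v(a;\by) = \theta_2^{-1}\gamma(p_1^\by) + \theta_2^{-1} v(b; T(\by)) = \theta_2^{-1}\gamma(p_1^\by) + \theta_2^{-1} v(b)$.

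Next I would construct a competitor. Using Lemma \ref{lem:replace} (or directly Lemma \ref{lem:periodicmin}), replace the tail of $\by$ past coordinate $1$ by $T(\bx(b))$: the sequence $\bz = \by_0 \, \by_1 \, T(\bx(b))$ lies in $\SFTD$ (because $\sigma(c_1^\by) = \sigma(b)$ is compatible with the head of $\bx(b)$ in the same way $\by$ is), has $c_0^\bz = a$, and satisfies $v(a;\bz) = \theta_2^{-1}\gamma(p_1^\by) + \theta_2^{-1} v(b;T(\bx(b))) = \theta_2^{-1}\gamma(p_1^\by) + \theta_2^{-1} v(b) = v(a)$. Thus $\bz$ is also minimal for $a$, and moreover $\bz$ has the continuation property by Lemma \ref{lem:replace}. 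Now compare $\bz$ with $\bx(a)$: both are in $\SFTD$, both have $c_0 = a$, both satisfy $v(a; \cdot) = v(a)$, and I would arrange (after one application of $T$, or by noting both have the continuation property with the same initial symbol) that they are ultimately equal, so that Lemma \ref{lem:cola} applies. Concretely, $\bz_i = \bx(b)_{i-1} = (p^{(b)}, \hat b, s^{(b)})$ for $i$ large and $\bx(a)_i = (p^{(a)}, a, s^{(a)})$ for all $i \geq 1$; if these eventually coincide then $\hat b = a$ forces $b$ and $a$ to have the same periodic behavior, and more to the point, applying Lemma \ref{lem:cola} to a common tail of $\bz$ and $\bx(a)$ (both realizing $v(a)$) yields $\bz_i = \bx(a)_i$ for all $i \geq 1$; in particular $\bz_1 = \by_1 = \bx(a)_1 = (p^{(a)}, a, s^{(a)})$, which is the claim.

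The one point requiring care is arranging a \emph{common tail} so that Lemma \ref{lem:cola} can be invoked: Lemma \ref{lem:cola} needs $(\bx_i)_{i \geq l+1} = (\by_i)_{i \geq l+1}$ for some $l$, plus equality of the realized values. I would obtain this by passing to periodic sequences on both sides — replace $\by$ first by $\bz = \by_0 \by_1 T(\bx(b))$ as above, which is ultimately periodic with period $(p^{(b)}, \hat b, s^{(b)})$, and compare it with $\bx(a)$, which is ultimately periodic with period $(p^{(a)}, a, s^{(a)})$; if $b \neq a$ or the periodic data differ, these two periodic sequences are \emph{not} ultimately equal, so one cannot directly apply Lemma \ref{lem:cola}. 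The resolution is exactly the argument pattern of Theorem \ref{theo:periodicmin}: first deduce from the two realizations of $v(a)$ and the prefix-uniqueness of Lemma \ref{lem:alg} that the \emph{finite} initial segments must match, which already pins down $\bz_1 = \bx(a)_1$, hence $\by_1 = (p^{(a)}, a, s^{(a)})$.

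The main obstacle, then, is purely bookkeeping: lining up the right finite segments so that the algebraic rigidity (Lemma \ref{lem:alg}, via Lemma \ref{lem:cola}) forces coordinate-by-coordinate agreement on the prefix, rather than merely agreement of the geometric quantities $v(\cdot)$. Once the equality $v(a;\bz) = v(a)$ is established for the explicitly modified sequence $\bz$ whose tail is the canonical $\bx(b)$, the conclusion $\by_1 = (p^{(a)}, a, s^{(a)})$ follows as in the contradiction step of Theorem \ref{theo:periodicmin}.
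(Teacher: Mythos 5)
Your first half reproduces the paper's argument: by minimality of $\by$ for $a$ (Lemma \ref{contproperty}) one gets $v(c_1^\by)=v(c_1^\by;T(\by))$, hence the spliced sequence $\bz=\by_0\by_1 T(\bx(c_1^\by))\in\SFTD$ also realizes $v(a;\bz)=v(a)$. The problem is the final step, and you have put your finger on it yourself: $\bz$ has tail the periodic block of $b=c_1^\by$, while $\bx(a)$ has tail the periodic block of $a$, so the pair $(\bz,\bx(a))$ need not satisfy the common-tail hypothesis of Lemma \ref{lem:cola}. Your proposed resolution --- ``deduce from the two realizations of $v(a)$ and the prefix-uniqueness of Lemma \ref{lem:alg} that the finite initial segments must match'' --- is not a valid deduction. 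The rigidity of Lemma \ref{lem:alg}, as exploited in Lemma \ref{lem:cola}, acts only on \emph{finite} words: one must first cancel a common tail so that the equality of two series collapses to an equality of finite sums $\sum_{i=1}^{l}\theta_2^{-i}\gamma(\cdot)$, which then identifies two prefixes of $\sigma^{l}(a)$. When the tails differ, the single real equality $v(a;\bz)=v(a;\bx(a))$ carries no information about any initial segment, and ``the argument pattern of Theorem \ref{theo:periodicmin}'' does not rescue this: in that proof, too, the two sequences fed to Lemma \ref{lem:cola} are explicitly built to end with the \emph{same} tail $T(\bx(b))$.

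The missing idea is the paper's auxiliary sequence. Since $a\in\hat A$, $v(a)=\theta_2^{-1}\gamma(p^{(a)})+\theta_2^{-1}v(a)$, so the sequence $\bw=(p^{(a)},a,s^{(a)})\,(p^{(a)},a,s^{(a)})\,T(\bz)$ belongs to $\SFTD$ (note that $\by_1$ is a decomposition of $\sigma(a)$, so the concatenation is legitimate) and satisfies $v(a;\bw)=\theta_2^{-1}\gamma(p^{(a)})+\theta_2^{-1}v(a;\bz)=v(a)$. Now $\bw_i=\bz_{i-1}$ for $i\ge 2$, and $\bz_i$ is the constant block $(p^{(b)},\hat b,s^{(b)})$ for $i\ge 2$, so $\bw$ and $\bz$ \emph{do} agree from index $3$ on; both are rooted at $a$ and both realize $v(a)$, so Lemma \ref{lem:cola} applies to this pair and the shift-by-one offset forces $\by_1=\bz_1=\bw_1=(p^{(a)},a,s^{(a)})$. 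It is exactly this construction --- prepending the periodic block of $a$ to the tail of $\bz$ so as to manufacture two minimizing sequences with a common tail, one shifted against the other --- that your proposal lacks; without it the argument does not close.
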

\begin{proof}
Put $c_0^\by=a$. First we prove that $v(c_1^\by)=v(c_1^\by;T(\by))$. 
Let $\bz= \by_0 \by_1 T(\bx(c_1^\by)) \in \SFTD$.  If the assertion is not true then 
$$v(a)=\theta_2^{-1} (\gamma(p^\by_1)+ v(c_1^\by;T(\by)))
> \theta_2^{-1} (\gamma(p^\by_1)+ v(c_1^\by) )= v(a;\bz) \geq v(a)$$
which is a contradiction.  Thus, $v(c_1^\by)=v(c_1^\by;T(\by))$ and  furthermore $v(a)=v(a;\bz)$.  

Then, the point $\bw=(p^{(a)},a,s^{(a)}) (p^{(a)},a,s^{(a)}) T(\bz)$ verifies $v(a)=v(a;\bw)$. But $\bw$ and 
$\bx(a)$ are ultimately equal, then by Lemma \ref{lem:cola}, $\by_1=(p^{(a)},a,s^{(a)})$.

\end{proof}

\begin{lemma}\label{lem:separation}
Let $x \in \xs$ be a minimal point. Then,  
$$\liminf_{n\to \infty} \frac{\gamma(x_0\ldots x_n)}{ n^{\frac{\log(\theta_2)}{\log (\theta_1)}}} >0
\text{       and      }  
\liminf_{n\to \infty} \frac{-\gamma(x_{-n}\ldots x_{-1})}{ n^{\frac{\log(\theta_2)}{\log (\theta_1)}}} >0
$$
\end{lemma}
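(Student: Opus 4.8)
The plan is to exploit the ultimate periodicity of the prefix-suffix decomposition of a minimal point, established in Theorem~\ref{theo:periodicmin}, together with the self-similar (fractal) structure of the growth of $\gamma(x_0\ldots x_n)$ under $\sigma$. Fix a minimal point $x$ with prefix-suffix decomposition $(p_i,c_i,s_i)_{i\in\NN}$, which by Theorem~\ref{theo:periodicmin} is ultimately periodic; after applying a power of $\sigma$ we may assume (as in the simplification made before Lemma~\ref{lem:replace}) that it is purely periodic of period $1$, say $(p_i,c_i,s_i)=(p^{(a)},a,s^{(a)})$ for all $i\geq 1$ with $a=\hat a\in\hat A$. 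Then $x[0,\infty)=c_0s_0\,\sigma(s_1)\,\sigma^2(s_2)\cdots$ and the central part is reconstructed by the renormalization $x=\sigma(x')$ up to a bounded shift, where $x'$ is again (periodic) minimal for the same letter.

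First I would reformulate the claim in terms of the ``best suffix'' quantities. For $n\geq 0$ let $\mu_n(x)=\min\{\gamma_k(x):1\le k\le n\}$ wait — more usefully, I want a lower bound for $\gamma_n(x)=\gamma(x_0\ldots x_{n-1})$ itself, knowing that it is $>0$ for all $n\neq 0$ by Lemma~\ref{lem:alg} and minimality. The key is a self-referential inequality: because $x[0,\infty)$ begins with a controlled word followed by $\sigma$ applied to the tail of a minimal point, and because $\gamma\circ\sigma^m=\theta_2^m\gamma$ (Lemma~\ref{lem:profile}), the partial sums $\gamma_n(x)$ for $n$ in the range of lengths $[\,|\sigma^m(\cdot)|,|\sigma^{m+1}(\cdot)|\,]$ are, up to additive errors that are $O(\theta_2^m)$ and a minimum over the best-occurrence positions, a copy of $\theta_2^m$ times the partial sums of the renormalized minimal point $x'$ at much smaller indices. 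Iterating this $m$ times against a fixed index $n$ with $n\asymp \theta_1^m$ (since $|\sigma^m(a)|\asymp\theta_1^m$ by Perron--Frobenius), I would extract
$$
\gamma_n(x)\;\ge\; c\,\theta_2^{m}\;=\;c\,\theta_1^{\,m\frac{\log\theta_2}{\log\theta_1}}\;\asymp\; c'\,n^{\frac{\log\theta_2}{\log\theta_1}},
$$
for a positive constant $c'$ independent of $n$, giving the first liminf inequality. The second one, for $\gamma(x_{-n}\ldots x_{-1})$, is entirely symmetric, using the $\SFTU$-side of the prefix-suffix decomposition and the fact that $x(-\infty,-1]$ is built from $\sigma$ applied to prefixes; note the sign, $-\gamma_{-n}(x)>0$, again by minimality and Lemma~\ref{lem:alg}.

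To make the iteration bookkeeping clean I would proceed by induction on $m$, proving an estimate of the form: there exist constants $0<c_0\le C_0$ such that for all $n$ with $|\sigma^{m}(a)|\le n<|\sigma^{m+1}(a)|$ one has $\gamma_n(x)\ge c_0\theta_2^{m}-C_0$; the base case $m=0$ is the finite statement that $\gamma_k(x)>0$ for $1\le k<|\sigma(a)|$, hence bounded below by a positive constant on this finite set, and the inductive step uses Lemma~\ref{lem:fractalgamma} to locate where the minimum of $\Gamma(\sigma^{m+1}(a))$ sits (at the image under $\sigma^m$ of the best occurrence, which by periodicity is always the same letter $a$ in the same position), together with the decomposition $\sigma^{m+1}(a)=\sigma^{m}(p^{(a)})\,\sigma^{m}(a)\,\sigma^{m}(s^{(a)})$ wait this is only one step — more precisely I unfold using the periodic strategy so that $\sigma^{m+1}(a)=\sigma^{m}(p^{(a)})\cdots\sigma(p^{(a)})\,p^{(a)}\,a\,s^{(a)}\,\sigma(s^{(a)})\cdots\sigma^{m}(s^{(a)})$ and apply $\gamma\circ\sigma^j=\theta_2^j\gamma$ termwise; the contribution of the prefixes is exactly the truncated series $v_m(a)$ of subsection~\ref{series}, which converges, and minimality forces the relevant partial sums to dominate $\theta_2^m\,(\text{a fixed positive quantity})$ plus a uniformly bounded error coming from $v(a)=\lim v_m(a)$ and the finitely many ``inner'' terms $p^{(a)}\,a\,s^{(a)}$.

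The main obstacle I anticipate is the error control: the additive $O(\theta_2^m)$ terms that appear when one relates $\gamma_n(x)$ for $n$ not exactly equal to a block length $|\sigma^m(a)|$ to the renormalized object are of the \emph{same order} $\theta_2^m$ as the main term, so one cannot simply absorb them — one must check that minimality (i.e.\ $\gamma_k(x)\ge 0$ for \emph{every} $k$, not just block endpoints) pins the sign of these contributions the right way, using that $a$ is the best occurrence of itself throughout, and that the bounded constant $C_0$ (coming from the convergent tail $v(a)-v_m(a)=O(\theta_2^{-m})$ rescaled by $\theta_2^m$, hence $O(1)$, plus the finitely many uniformly bounded inner words) genuinely stays $O(1)$ as $m\to\infty$. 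Once that uniform bound is in hand, choosing $m=m(n)$ maximal with $|\sigma^m(a)|\le n$ and using $|\sigma^m(a)|\le \theta_1^m\cdot\lambda(a)/\lambda_{\min}\cdot(1+o(1))$ (two-sided Perron--Frobenius bounds on $M^m$) converts $\theta_2^m$ into $n^{\log\theta_2/\log\theta_1}$ up to a multiplicative constant, and taking $\liminf$ kills the $-C_0$, which completes the proof.
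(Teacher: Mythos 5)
Your induction never actually produces the ``fixed positive quantity'' that the main term $\theta_2^m$ is supposed to dominate, and this is precisely where the lemma's real content lies. Minimality only gives $\gamma_k(x)\ge 0$, and your base case only yields an order-one lower bound; since (as you yourself note) the errors in each inductive step are of the same order $\theta_2^m$ as the hoped-for main term, nonnegativity cannot be upgraded to growth $\gtrsim\theta_2^m$ by renormalization bookkeeping alone. Concretely, unrolling $\sigma(a)=p^{(a)}as^{(a)}$, a partial sum at a position lying inside the block $\sigma^m(s^{(a)})$ equals $\theta_2^m\bigl(\gamma(a)-v(a)+w_m\bigr)+O(1)$, where $\theta_2^m w_m$ is the running minimum of $\gamma$ over prefixes of $\sigma^m(s^{(a)})$ and $w_m$ converges. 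Minimality forces only $\gamma(a)-v(a)+\lim_m w_m\ge 0$; if equality held, these partial sums would stay bounded (hence $o(\theta_2^m)$) while remaining nonnegative, so $x$ would still be minimal and your inductive estimate $\gamma_n(x)\ge c_0\theta_2^m-C_0$ would be false. Excluding the equality case is exactly what the paper gets from the rigidity of the minimizing continuation: assuming the liminf vanishes along $n_i$, a compactness argument as in Lemma \ref{lem:series} applied to the decomposition of $T^{n_i+1}(x)$ produces a second sequence $\by\in\SFTD$ with $v(a;\by)=v(a)$ based at a nonzero position, contradicting the uniqueness statement of Lemma \ref{lem:unico}, which in turn rests on Lemma \ref{lem:cola} and hence on the algebraic hypothesis. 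Your proposal invokes Lemma \ref{lem:alg} only to get strict positivity of partial sums, and never uses Lemmas \ref{lem:unico} or \ref{lem:cola}; without that input the inductive step cannot close, so there is a genuine gap, not just an error-control chore.

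Two secondary points. First, you cannot reduce to a purely periodic prefix-suffix decomposition: the paper explicitly notes that minimal points with ultimately periodic but non-periodic decompositions exist, so the preperiodic central part must be carried along (this is minor). Second, and more seriously for your scheme, the degenerate case $s^{(a)}=\varepsilon$ (and symmetrically $p^{(a)}=\varepsilon$ on the negative side) is not covered by your block induction at all: there $x[0,\infty)$ is eventually a fixed point of a power of $\sigma$ at a different letter $b$, and one must first show $v(b)=0$ and transfer the argument to $b$, which is the separate case (2) in the paper's proof by contradiction.
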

\begin{proof}
We only prove the first inequality, the other one can be shown analogously.
Assume the result does not hold. Then, for a subsequence $(n_i)_{i\in \NN}$, 
$$\lim_{i \to \infty} \frac{\gamma(x_0\ldots x_{n_i})}{ n_i^{\frac{\log(\theta_2)}{\log (\theta_1)}}} =0$$

Let $(p_i,c_i,s_i)_{i\in \NN}$ be the prefix-suffix decomposition of $x$ and let $a \in \hat A$ such that $(p^{(a)},a,s^{(a)})$ is the periodic part of it. 

(1) First we assume $s^{(a)}$ is different from the empty word. Let $N_i$ be the minimal integer such that $x_1\ldots x_{n_i}$ is the prefix of $\sigma^{N_i}(a)$. 

Consider the prefix-suffix decomposition $(p^{(n_i)}_j,c^{(n_i)}_j,s^{(n_i)}_j)_{j\in \NN}$ of $T^{n_i+1}(x)$. Clearly, 

$$\sigma^{N_i-1}(p^{(n_i)}_{N_i-1})\ldots \sigma(p^{(n_i)}_1) p^{(n_i)}_0=
\sigma^{N_i-1}(p_{N_i-1})\ldots \sigma(p_1) p_0 x_0\ldots x_{n_i}$$

Then, 
$$\sum_{j=N_i-1}^{0} \theta_2^{j} \gamma(p^{(n_i)}_j)=
\sum_{j=N_i-1}^{0} \theta_2^{j} \gamma(p_j) + \gamma(x_0\ldots x_{n_i})$$

Dividing by $\theta_2^{N_i}$ one gets, 
$$\sum_{j=1}^{N_i} \theta_2^{-j} \gamma(p^{(n_i)}_{N_i-j})=
\sum_{j=1}^{N_i} \theta_2^{-j} \gamma(p_{N_i-j}) +\theta_2^{-N_i}  \gamma(x_0\ldots x_{n_i}) $$

Taking the limit when $i \to \infty$ and using the fact that $x$ is minimal one gets
$$\lim_{i\to \infty} \sum_{j=1}^{N_i} \theta_2^{-j} \gamma(p^{(n_i)}_{N_i-j})=v(a)$$
since by assumption $\lim_{i \to \infty} \theta_2^{-N_i}  \gamma(x_0\ldots x_{n_i})=0$. Observe that 
$n_i$ behaves like $\theta_1^{N_i}$.

This property allows to show, following the same ideas used to prove Lemma \ref{lem:series},
that there is $\by=(p^\by_i,c^\by_i,s^\by_i)_{i\in \NN} \in \SFTD$ such that 
$v(a;\by)=v(a)$. By Lemma \ref{lem:unico}, $\by_1=(p^{(a)},a,s^{(a)})$. This implies $n_i+1=0$ for some large $i$, which is a contradiction. 

(2) Now suppose $s^{(a)}$ is the empty word. Then, (considering a power of $\sigma$ if necessary)  
$(x_n)_{n\geq N}=\lim_{m\to \infty} \sigma^m(b)$ for  some $N\in \NN$ and $b\in A$. If we write 
$\sigma(b)=bs$ one obtains $x_m \ldots= b s \sigma(s) \sigma^2(s) \ldots$. 

We claim $v(b)=0$. Suppose this is not true. Then for $k \in \NN$ large enough one has
$\sum_{i=1}^k \theta_2^{K-i} \gamma(p^{\bx(b)}_i)\leq  K \theta_2^k$ with $K<0$.  That is, 
$\gamma$ applied to a prefix of $\sigma^k(b)$ can be as negative as we want if $k$ increases. 
This implies that $\gamma_n(x) <0$ for some $n \in \NN$, which is imposible since $x$ is a minimal point. Then $v(b)=0$. Furthermore, we have proved that $\gamma(x_N\ldots x_{N+i})>0$ for all $i\geq 1$. One also deduces, by the algebraic condition, that $\bx(b)=(\varepsilon,b,s)_{i\in \NN}$, where $\varepsilon$ is the empty word. 

To conclude one uses part (1) with $b$ instead of $a$. 

\end{proof}

The following proposition is plain.

\begin{prop}\label{prop:expseries}
Let $x \in \xs$ be a minimal point. Then,
$$\sum_{n\geq 1} e^{{-\gamma(x_0\ldots x_{n-1})}} < \infty  \text{ and } \sum_{n\geq 1} 
e^{{\gamma(x_{-n}\ldots x_{-1})}} <\infty$$
\end{prop}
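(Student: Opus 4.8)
The plan is to deduce the convergence of both series directly from the lower bounds established in Lemma \ref{lem:separation}. Set $\alpha = \frac{\log(\theta_2)}{\log(\theta_1)}$ and note that $\alpha \in (0,1)$ by hypothesis (\ref{sup1-hyp}) of Theorem \ref{main}, since $1 < \theta_2 < \theta_1$ forces $0 < \log\theta_2 < \log\theta_1$. By Lemma \ref{lem:separation} applied to the minimal point $x$, there is a constant $c > 0$ and an index $n_0$ such that for all $n \geq n_0$ one has $\gamma(x_0\ldots x_{n-1}) \geq c\, (n-1)^{\alpha}$, and likewise $-\gamma(x_{-n}\ldots x_{-1}) \geq c\, n^{\alpha}$ for $n$ large.

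The first step is therefore to record that $e^{-\gamma(x_0\ldots x_{n-1})} \leq e^{-c(n-1)^{\alpha}}$ for all $n$ past a finite threshold, the finitely many initial terms being harmless. The second step is the elementary observation that $\sum_{n\geq 1} e^{-c n^{\alpha}} < \infty$ for any $c>0$ and any $\alpha>0$: indeed $e^{-cn^{\alpha}} = o(n^{-2})$ as $n\to\infty$ since $cn^{\alpha} - 2\log n \to +\infty$, so the tail is dominated by a convergent $p$-series. The same two steps, applied verbatim to the sequence $(x_{-n})$, give $\sum_{n\geq 1} e^{\gamma(x_{-n}\ldots x_{-1})} \leq \text{(const)} + \sum_{n} e^{-c n^{\alpha}} < \infty$.

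There is really no obstacle here: the word ``plain'' in the statement is accurate, and the entire content of the proposition is already contained in Lemma \ref{lem:separation}; what remains is only the remark that a $\liminf$ of $\gamma(x_0\ldots x_n)/n^{\alpha}$ being strictly positive yields an honest lower bound $\gamma(x_0\ldots x_n) \geq c\, n^{\alpha}$ for all large $n$ (shrinking $c$ if necessary), together with the standard fact that stretched-exponential series converge. The only point to state with a little care is the bookkeeping with the shift in the index ($x_0\ldots x_{n-1}$ versus $x_0\ldots x_n$, and the sign convention $\gamma_n(x)$ for $n<0$), but this affects nothing beyond a harmless reindexing. I would write the argument in two or three sentences and move on.
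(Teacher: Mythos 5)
Your argument is correct and is exactly what the paper intends: the proposition is stated as ``plain'' because, as the authors note in the introduction, the lower bounds of Lemma \ref{lem:separation} (formula \eqref{speed}) immediately give stretched-exponential decay $e^{-c\,n^{\alpha}}$ with $\alpha=\log\theta_2/\log\theta_1>0$, whence convergence. Your write-up, including the harmless reindexing and the comparison with a convergent $p$-series, matches the paper's (implicit) proof, so there is nothing to add.
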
 

%%%%%%%%%%%%%%%%%%%%%%%%%%%%%%%%%%%%%%%%%%%%%%%%%%%%%%%%%%%%%%%%%%%%%%%%%%%%%%%%%
\section{Proof of the Main Theorem} \label{proof-main}

%%%%%%%%%%%%%%%%%%%%%%%%%%%%%%%%%%%%%%%%%%%%%%%%%%%%%%%%%%%%%%%%%%%%%%%%%%%%%%%%%%

The arguments of this section follows the strategy developed in  the works of \cite{Ca} and \cite{Co}. 

Let $\IET$ be a self-similar interval exchange transformation and $R$ its associated matrix. 
Assume $R$ verifies hypotheses of Theorem \ref{main}. 

Let $\xs$ be the substitutive system associated to $\IET$ and let $M={^tR}$ be the associated matrix. Consider a minimal point $x \in \xs$. 
By Proposition \ref{prop:expseries}, 
$$K=  \sum_{n\geq 1} e^{{\gamma(x_{-n}\ldots x_{-1})}} + 1+ \sum_{n\geq 1} e^{{-\gamma(x_0\ldots x_{n-1})}} <\infty$$

Let $t=\varphi(x)$. That is, $x$ is the coding of $t$ or $x$ is the coding of 
$(\lim_{s \to t^-} T^i(s))_{i\in \ZZ}$ in the case $t$ is in the orbit of one of the $a_i$'s.  
To simplify notations we assume the first case holds, the other one is analogous.

Define the probability measure $\mu_t$ on $[0,1)$ by 
$$\mu_t= \frac{1}{K} \left (  \sum_{n\geq 1} 
e^{{\gamma (x_{-n} \ldots x_{-1}) }} \delta_{\IET^{-n}t} + \delta_t + \sum_{n\geq 1} 
e^{{-\gamma(x_0\ldots x_{n-1})}} \delta_{\IET^{n}t} \right ) $$

\begin{lemma}\label{lem:goodmu} For every Borel set $I \subseteq [0,1)$
$$\mu_t(\IET(I))= \sum_{i=1}^r e^{-\gamma_i} \mu_t(I\cap [a_{i-1},a_i))$$ 
\end{lemma}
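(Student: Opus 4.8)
The plan is to recast $\mu_t$ in a single formula and then reduce the identity to point masses. Put $S_0 = 0$, $S_n = \gamma(x_0\ldots x_{n-1})$ for $n \geq 1$, and $S_{-n} = -\gamma(x_{-n}\ldots x_{-1})$ for $n \geq 1$ (this is the broken line $\ell(x)$); then $S_{n+1} - S_n = \gamma(x_n)$ for every $n \in \ZZ$, and the definition of $\mu_t$ reads
\[
\mu_t = \frac1K \sum_{n \in \ZZ} e^{-S_n}\, \delta_{\IET^n t},
\]
a purely atomic probability measure carried by the orbit $O = \{\IET^n t : n \in \ZZ\}$, an infinite set of distinct points since $\IET$ is minimal (being a factor of the minimal subshift $\xs$).

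Both sides of the asserted identity are Borel measures in the variable $I$: the right-hand side is a finite positive combination of the measures $I \mapsto \mu_t(I \cap [a_{i-1}, a_i))$, while $I \mapsto \mu_t(\IET(I))$ is a measure because $\IET$ is a bijection of $[0,1)$ which maps Borel sets to Borel sets and carries disjoint sets to disjoint sets. Both are concentrated on the countable set $O$, so by countable additivity it suffices to verify the equality for $I$ a singleton $\{\IET^n t\}$ with $n \in \ZZ$ and for $I$ disjoint from $O$; in the second case $\IET(I)$ is again disjoint from $O$ by injectivity of $\IET$, so both sides vanish.

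It remains to treat $I = \{\IET^n t\}$. Since $x$ codes the $\IET$-orbit of $t$, we have $\IET^n t \in [a_{x_n - 1}, a_{x_n})$, hence $I \cap [a_{i-1}, a_i)$ equals $I$ when $i = x_n$ and is empty otherwise. Therefore the right-hand side equals $e^{-\gamma_{x_n}}\, \mu_t(\{\IET^n t\}) = \tfrac1K e^{-\gamma(x_n) - S_n} = \tfrac1K e^{-S_{n+1}}$, using $S_{n+1} = S_n + \gamma(x_n)$; and the left-hand side is $\mu_t(\IET(I)) = \mu_t(\{\IET^{n+1}t\}) = \tfrac1K e^{-S_{n+1}}$. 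The two agree, which proves the lemma.

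The argument involves no serious difficulty; it is essentially bookkeeping. The points deserving a little attention are the passage from the three-part definition of $\mu_t$ to the single exponential $e^{-S_n}$ (notably the sign on the negative-index side), the verification that $I \mapsto \mu_t(\IET(I))$ is genuinely countably additive, and the remark that the atoms of $\mu_t$ are distinct so that reduction to singletons is legitimate.
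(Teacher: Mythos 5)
Your proof is correct and takes essentially the same route as the paper's: the paper reduces to $I=[a_{i-1},a_i)$ and shifts the delta masses in the three defining sums, which is exactly your atom-by-atom identity $e^{-S_{n+1}}=e^{-\gamma(x_n)}e^{-S_n}$ written with reindexed series instead of singletons. Your version merely makes the measure-theoretic bookkeeping (atomicity of $\mu_t$ on the orbit, distinctness of the atoms, countable additivity of $I\mapsto\mu_t(\IET(I))$) explicit, which the paper leaves implicit.
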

\begin{proof}
It is enough to consider $I= [a_{i-1},a_i)$ for $i\in A$.  One has, 

\begin{align*}
&\mu_t(\IET(I)) \\
&= \frac{1}{K} \left (  \sum_{n\geq 1} 
e^{{\gamma (x_{-n} \ldots x_{-1}) }} \delta_{\IET^{-n}t}+ \delta_t +   
 \sum_{n\geq 1} e^{{-\gamma(x_0\ldots x_{n-1})}} \delta_{\IET^{n}t} \right )(\IET(I))  \\
&= \frac{1}{K} \left (  \sum_{n\geq 1} 
e^{{\gamma (x_{-n} \ldots x_{-1}) }} \delta_{\IET^{-n-1}t} + \delta_{\IET^{-1}t}+ 
\sum_{n\geq 1} e^{{-\gamma(x_0\ldots x_{n-1})}} \delta_{\IET^{n-1}t}\right ) (I) \\
&= \frac{1}{K} \left ( \sum_{n\geq 1} 
e^{-\gamma(x_{-n})}  e^{{\gamma (x_{-n} \ldots x_{-1}) }} \delta_{\IET^{-n}t}  
 + e^{-\gamma(x_{0})} 
 \delta_{t}+
\sum_{n\geq 1} e^{-\gamma(x_n)} e^{{-\gamma(x_0\ldots x_{n-1})}} \delta_{\IET^{n}t} \right ) (I) \\
&= e^{-\gamma_i} \mu_t(I)
\end{align*}
where in the last equality we use the fact that $\IET^n(t) \in I$ if and only if $\gamma(x_n)=\gamma_i$.
\end{proof}

Define  $g: [0,1) \to [0,1)$ by $g(s)=\mu_t([0,s])$. This function is nondecreasing, right continuous and has left limits. Let $i\in A$. Denote $a'_i=T(a_i)$ and define $b_i=\lim_{a \to a_i^-}g(a)$ and 
$b'_i=\lim_{a' \to (a'_i)^-}g(a')$. Then at interval $[b_{i-1},b_i)$ define  linearly the AIET $f$ with image 
$[b_{i-1}',b_i')$. The slope vector of $f$ is $w=(e^{-\gamma_1},\ldots,e^{-\gamma_r})$. 
Indeed, 
$$\frac{b_i'-b'_{i-1}}{b_i-b_{i-1}}=\frac{\mu_t([a'_{i-1},a'_i))}{\mu_t([a_{i-1},a_i))}=e^{-\gamma_i}$$ 
where the last equality follows from Lemma \ref{lem:goodmu}.

Let $h: [0,1) \to [0,1)$ be the map defined by:
$h(v)=u$ if  $g(u)=v$ and $h(v)=u$ if $\lim_{w \to u^-} g(w)\leq v \leq g(u)$. Clearly $h$ is surjective, continuous and non decreasing.  Since $\mu_t$ has atoms, then $h$ is not injective  

The following lemma allows to conclude Theorem \ref{main}.

\begin{lemma}\label{lem:final}
The map $h$ defines a semi-conjugacy between the AIET $f$ and $\IET$. Moreover, $f$ has 
wandering intervals.
\end{lemma}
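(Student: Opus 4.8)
The plan is to verify two things: first, that $h$ conjugates $f$ to $\IET$, i.e.\ $h \circ f = \IET \circ h$; second, that $f$ possesses a wandering interval. For the conjugacy, recall that $g(s) = \mu_t([0,s])$ and that $h$ is its right-continuous generalized inverse. The key is the intertwining relation between $g$ and the two maps: by Lemma \ref{lem:goodmu}, for each $i \in A$ and each $s \in [a_{i-1}, a_i)$ one has $g(\IET(s)) = \mu_t([0,\IET(s)])$, and using $\mu_t(\IET(I)) = \sum_{j} e^{-\gamma_j}\mu_t(I \cap [a_{j-1},a_j))$ applied to $I = [a_{i-1}, s]$ together with the definition of $f$ on $[b_{i-1}, b_i)$ (which was built precisely so that $f$ maps $g(s)$ to $g(\IET(s))$), one gets $g \circ \IET = f \circ g$ on the set where $g$ is evaluated. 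Passing to the inverse relation, this yields $h \circ f = \IET \circ h$. I would do this carefully at the atoms of $\mu_t$ (the points $\IET^n t$), checking that the intervals $[b_{i-1}, b_i)$ and $[b'_{i-1}, b'_i)$ have been set up consistently with the jump sizes of $g$, so that $f$ is well-defined and the intertwining holds there too.

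For the wandering interval: the orbit point $t = \varphi(x)$ is an atom of $\mu_t$ of mass $\mu_t(\{t\}) = 1/K > 0$. Hence $g$ has a jump of size $1/K$ at $t$, which means the interval $J = [\lim_{s\to t^-} g(s),\ g(t))$ has positive length $1/K$ and $h(J) = \{t\}$ — more precisely, $h$ collapses $\overline{J}$ to the single point $t$. I claim $J$ is a wandering interval for $f$. Indeed, since $h \circ f = \IET \circ h$, for every $n \in \ZZ$ we have $h(f^n(J)) = \IET^n(h(J)) = \{\IET^n t\}$, so $f^n(J)$ is contained in the fiber $h^{-1}(\IET^n t)$, which is the closed interval corresponding to the atom of $\mu_t$ at $\IET^n t$ of mass $\frac{1}{K} e^{-\gamma(x_0\ldots x_{n-1})}$ (for $n > 0$; similarly $\frac{1}{K} e^{\gamma(x_{-n}\ldots x_{-1})}$ for $n<0$). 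These fibers are pairwise disjoint because the points $\IET^n t$, $n \in \ZZ$, are distinct — this uses that $\IET$ is minimal (equivalently the subshift $\xs$ is minimal and infinite) so $t$ is not periodic. Consequently the $f^n(J)$ are pairwise disjoint, which is exactly the statement that $J$ is a wandering interval.

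I would also remark, to close the loop, that the lengths $|f^n(J)| = \frac{1}{K} e^{-\gamma(x_0\ldots x_{n-1})}$ (and the analogue for $n<0$) sum to a finite value — this is precisely Proposition \ref{prop:expseries} — which is what makes the construction consistent (the total mass is $1$, the measure $\mu_t$ is well-defined on $[0,1)$), but it is not logically needed for the wandering property itself, only disjointness is.

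\textbf{Expected main obstacle.} The delicate point is the bookkeeping at the atoms: one must check that $f$, defined interval-by-interval via $g$, genuinely satisfies $h \circ f = \IET \circ h$ at \emph{every} point, including the countably many atoms $\IET^n t$ and their left-limits, and that the one-sided limits $b_i = \lim_{a\to a_i^-} g(a)$, $b_i' = \lim_{a'\to (a_i')^-} g(a')$ make $f$ a genuine bijection of $[0,1)$ with the prescribed slopes. This is essentially the verification that the "blow-up" of the orbit of $t$ has been performed coherently; once it is done, the wandering property is a short consequence of minimality of $\IET$ plus the intertwining.
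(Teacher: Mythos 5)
Your proposal is correct and follows essentially the same route as the paper: the semi-conjugacy is read off from the construction of $g$, $f$ and $h$ via Lemma \ref{lem:goodmu}, and the wandering interval is exactly the gap of $g$ at the atom $t$ (the paper takes $(\lim_{s\to t^-}g(s),g(t)]$, you take the same interval up to endpoint convention), with disjointness of its iterates coming from the intertwining relation and the fact that the orbit $(\IET^n t)_{n\in\ZZ}$ consists of distinct points. The paper's own proof is just a two-line assertion of these facts, so your more detailed verification (including the role of Proposition \ref{prop:expseries} in making $\mu_t$ finite) is a faithful expansion of it rather than a different argument.
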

\begin{proof}
The semi-conjugacy follows from construction. The interval $$I=(\lim_{s\to t^-}g(s),g(t)]$$ is a wandering interval for $h$.
\end{proof}

\section{Pseudo-Anosov diffeomorphisms and eigenvalues of matrices obtained by Rauzy induction} \label{pseudoAnosov}

In this section, we discuss the hypothesis of Theorem \ref{main} in a geometric language. 
Our hypothesis is that the Perron-Frobenius eigenvalue $\theta_1$ of the matrix $R$ has a real conjugate $\theta_2 > 1$. 

We recall that every interval exchange transformation $T_{(\lambda, \pi)}$ is  realized as the first return map of a flow on a translation surface $\S$ which genus $g(\pi)$ only depends on the permutation $\pi$ (and not on $\lambda$). This translation surface is not unique. 
If $T_{(\lambda, \pi)}$ is a periodic point of the Rauzy induction, one can choose $\S$ fixed by a pseudo-Anosov diffeomorphism $\phi$ (see \cite{Th} for an enlightening discussion on pseudo-Anosov diffeomorphisms). The eigenvalue $\theta_1$ is the dominant eigenvalue of the action of $\phi$ on the absolute homology of $\S$.
Therefore  $\theta_1$ is an algebraic number of degree at most $2g(\pi)$ over $\QQ$. 

Heuristically, after the work of Avila and Viana \cite{AV}, it is reasonable to believe that a ``generic" pseudo-Anosov satisfies our hypothesis.
Nevertheless, it seems extremely difficult to understand the eigenvalues of  {\it all} pseudo-Anosov diffeomorphisms. In this section, we want to explain that our hypothesis are often satisfied. 
They are not always satisfied: for instance, the conjugates of the Arnoux-Yoccoz pseudo-Anosov are not real. Situations much worse do exist.

\subsection{Existence of a conjugate $\theta_2$ with $|\theta_2| \geq 1$}

A pseudo-Anosov diffeomorphism preserves the symplectic form induced by the intersection form. Thus if $z$ is an  eigenvalue of the automorphism $\phi_*$ of $H_1(\S, \ZZ)$, its inverse $z^{-1}$ is also an eigenvalue of $\phi_*$. Consequently, $\frac{1}{\theta_1}$ is an eigenvalue of $\phi_*$. If it is the only Galois conjugate of $\theta_1$, it means that $\theta_1$ is an algebraic number of degree 2.
It is classical (see \cite{KS} for instance) that the surface $\S$ is then a covering of a torus (a square tiled surface up to normalization). 
Therefore hypothesis (\ref{algebraic-hyp}) is satisfied if and only if the surface $\S$ is not a square tiled surface. Thus, this hypothesis is very natural and simple to check. 
 
\subsection{Real conjugates} 
The second hypothesis is more subtle to analyze. 

A pseudo-Anosov diffeomorphism is obtained by Thurston's construction if it is the product of two affine Dehn twists $T_h$ and $T_v$ along two multi-curves filling a surface (see \cite{Th}).

After normalization,  the derivatives of the Dehn twists in the natural parameters of the translation surface are 
$$T_h = \begin{pmatrix}
1 & a \\
0 & 1
\end{pmatrix}, \
T_v = \begin{pmatrix}
1 & 0 \\
b & 1
\end{pmatrix}$$
\noindent where $a$ and $b$ are positive real numbers and $ab$ is an algebraic number.

An element $f$ of the group generated by $T_h$ and $T_v$ is a pseudo-Anosov diffeomorphism if the absolute value of the trace $t(f)$ of the corresponding matrix is larger than 2.
For every pseudo-Anosov diffeomorphism obtained by Thurston's construction,  the conjugates of $ t(f)$  are real numbers (see \cite{HL}). The dominant root of the action  of $f$ on the homology is the real number $\theta_1 > 1$ with $\theta_1 + \theta_1^{-1} = t(f)$.
The number $\theta_1$ (or one of its power) is the Perron-Frobenius eigenvalue of the matrix obtained by Rauzy induction considered in the present paper (see \cite{Ve}). 
Let $\theta'$ be a conjugate of $\theta_1$ and $t'(f) = \theta' + \theta'^{-1}$  a ({\it real}) conjugate of $t(f)$.
$\theta'$ is a real number with $\theta' > 1$ if $\vert t'(f) \vert > 2$.  It is a complex number of modulus one if $\vert t'(f) \vert < 2$.
This directly comes from the fact that  $\theta' + \theta'^{-1} = t'(f)$.

For instance, the diffeomorphisms $f_{n,m} = T_h^nT_v^m$ are pseudo-Anosov diffeomorphisms if $n, m$ are positive integers. In fact  the absolute value of the trace of 
$\begin{pmatrix}
1 & a \\
0 & 1
\end{pmatrix}^n
\begin{pmatrix}
1 & 0 \\
b & 1
\end{pmatrix}^m
$ is larger than 2 because $nm ab >0$.

Thus $\theta' > 1$ if $\vert t'(f) \vert = \vert 2+ nm (ab)' \vert > 2$ (where $(ab)'$ is a real number). This is satisfied for all couples $(n,m)$ except for finite number of exceptions. Using more sophisticated argument, $\vert t'(f) \vert > 2$ if $n$ and $m$ are positive integers.

\bigskip

{\bf Acknowledgments.} The second author is supported by project blanc ANR: ANR-06-BLAN-0038. The third author is supported by Nucleus
Millennium Information and Randomness P04-069-F.

\end{document}